\newenvironment{proof}[1][Proof:]{\begin{trivlist} 
\item[\hskip \labelsep {\bfseries #1}]}{\end{trivlist}} 
\newcommand{\qed}{\nobreak \ifvmode \relax \else \ifdim\lastskip<1.5em \hskip-\lastskip \hskip1.5em plus0em minus0.5em \fi \nobreak \vrule height0.75em width0.5em depth0.25em\fi} 
\def\0{\bf \0}
\def\A{{\bf A}}
\def\B{{\bf B}}
\def\E{{\bf E}}
\def\F{{\bf F}}
\def\I{{\bf I}}
\def\M{{\bf M}}
\def\0{{\bf 0}}
\def\R{\mathbb{R}}
\def\T{{\bf T}}
\def\a{{\bf a}}
\def\b{{\bf b}}
\def\c{{\bf c}}
\def\e{{\bf e}}
\def\f{{\bf f}}
\def\u{{\bf u}}
\def\v{{\bf v}}
\def\w{{\bf w}}
\def\x{{\bf x}}
\def\y{{\bf y}}
\def\z{{\bf z}}
\def\Tr{{\rm T}}
\def\T{{\rm T}}
\newtheorem{algorithm}{Algorithm}[section]
\newtheorem{theorem}{Theorem}[section]
\newtheorem{corollary}{Corollary}[section]
\newtheorem{remark}{Remark}[section]
\newcommand*\Bell{\ensuremath{\boldsymbol\ell}}
\begin{document}
\title{On the facet pivot simplex method for linear programming}
\author{
Yaguang Yang\thanks{Goddard Space Flight Center, NASA, 
8800 Greenbelt Rd, Greenbelt, MD 20771.
Email: yaguang.yang@verizon.net.} 
}


\maketitle    
\begin{abstract}

Dantzig's {\it vertex pivot} simplex method 
has been published for more than seven decades. Amazingly, it remains
one of the most efficient methods to solve linear programming (LP)
problem after numerous efforts trying to find some better
methods. In this paper, we propose a {\it facet pivot} simplex method
and demonstrate by numerical testing that the new method is 
very promising compared to the vertex pivot method. Since there is no polynomial pivot simplex algorithm for linear programming problems after many decades of effort, we hope that this new type of pivot algorithm will give us some hope to find a polynomial pivot simplex method for linear programming problems. A Matlab implementation of the facet pivot algorithm and Netlib benchmark test problems are available in Matlab file exchange website.

\end{abstract}

{\bf Keywords:} Facet pivot algorithm, simplex method, 
linear programming, Klee-Minty cube, cycling problem.

 
\section{Introduction}

Since Dantzig invented the vertex simplex method in the 1940s 
\cite{dantzig49}, the linear programming (LP) problem has been one
of the mostly studied mathematical problems. Although researchers
have engaged tremendous efforts searching for more efficient algorithms,
the vertex simplex method still stands as one of the most efficient
methods (especially for small and middle-size problems). Researchers have been very curious about what its 
performance is in the theoretically best scenario and in the practically 
worst scenario. Since the vertex simplex method searches for the optimizer among
vertices of a polytope along a path of edges, the theoretically best performance 
scenario for the vertex simplex method is related to the length of the diameter 
of the polytope. The famous Hirsch conjecture \cite{dantzig63} 
states that the shortest path length is bounded by $n-d$ 
(where $n$ is the number of facets and $d$ is the dimension 
of the polytope) which was disproven by Santos \cite{santos12}.
The best-known upper bounds for the polytope diameter 
\cite{kalai92,todd14,sukegawa19} are sub-exponential, which means that
vertex pivot algorithms may be very expensive based on the current estimate.
However, experts believe that the true upper bound of the diameter
(the best scenario) should be a polynomial of $n$ and $d$ \cite{santos12a}.

Although the theoretically best scenario looks bright, the research on the
practically worst scenario seems not, as Klee and Minty \cite{km72} 
discovered that the number of iterations used to find an optimal solution
in the worst case for Dantzig's {\it vertex pivot} simplex method increases 
exponentially as a function of the problem size $d$, and subsequently,
researchers showed that many popular vertex simplex algorithms 
require an exponential number of steps to find an optimizer
\cite{az96,ac78,af17,friedmann11,fhz11,gs79,hz15,jeroslow73,ziegler04}. 
Searching for efficient polynomial algorithms motivated the ellipsoidal 
method \cite{khachiyan} and the interior point method \cite{karmarkar84}, 
the latter has become very popular since the 1980s \cite{yang20}. 
On the other hand, there is still much research on novel vertex pivot simplex 
methods \cite{tz93}, and more recently, the exterior point simplex 
method \cite{paparrizos93} which relaxes the feasibility requirement 
while searching for the optimizer, and the double pivot simplex method 
\cite{ve18,yang20a} which updates two vertices at a time. 

Almost all simplex algorithms use {\it vertex pivot}, 
including dual simplex algorithms 
\cite{koberstein05,kostina02,lemke54,kgk79,maros03,pan05,vanderbei14}.
The vertex pivot simplex algorithms consider the standard linear
program problem with the constraints of $\A^I \x =\b^I$ and 
$\x \geq \0$, and the number of the columns of $\A^I$ is 
greater than the number of rows of $\A^I$. These algorithms utilize 
vertex geometry, and the base is formed from a subset of
primary variables (a primary vertex) or a subset of dual variables
and dual slack variables (a dual vertex); 
every iterate is a primal (or dual) feasible solution; 
and the iterate moves from a vertex to an adjacent vertex. Very recently, 
Liu et. al. \cite{ltz21} published their brilliant idea, which is based 
on the facet geometry. The facet pivot method considers the
canonical linear program problem with the constraints of $\A^J \x \ge \b^J$
and ${\Bell} \leq \x \leq \u$, where the number of the rows of constraints is greater
than the number of the columns of constraints\footnote{We can rewrite the two sets of constraints $\A^J \x \ge \b^J$
and ${\Bell} \leq \x \leq \u$ as one $\A \x = \left[ \begin{array}{c} \A^J \\ \I \\ -\I  \end{array} \right] \x \geq \left[ \begin{array}{c} \b^J \\ \Bell \\ -\u \end{array} \right]$.}. Every iterate is a basic
solution but not necessarily a feasible solution until an optimal 
solution is found.

The facet pivot simplex method deals with a format of linear 
programming problem that is similar to, but more general than, 
the dual simplex method. However, the two methods are 
fundamentally different in their basic ideas. Dual simplex method was
first introduced by Lemke \cite{lemke54} in 1954. It became a 
computationally competitive alternative of the primary simplex method 
after Forrest and Goldfarb \cite{fg92} published their steepest edge 
simplex algorithm. Koberstein and others (see \cite{koberstein08,ks07}
and references therein) carefully implemented the algorithm. 
The iterates of the dual simplex method are {\it dual basic feasible} 
in every iteration and the algorithm checks the feasibility of the 
{\it primal feasibility} to determine if an optimal solution is found. 
If it is not, it determines a leaving vector in the {\it dual base} 
(using the information of the primal variables) and 
then finds an entering vector corresponding to a nonbase 
dual variable so that the new basic solution is feasible 
for the {\it dual problem}. The search moves from one vertex to the 
next vertex of the {\it dual} problem until a basic solution is 
feasible for the primary problem. The computation involves both 
{\it primal} and {\it dual} variables in both primal and dual problems 
\cite{koberstein05}. On the other hand, the facet pivot method 
considers only the primal problem with equality, inequality and 
boundary constraints, the iterate is a basic but not necessarily a 
feasible solution of the primal problem until an optimal solution is 
found, and all steps involve only the primal but not the dual problem. 
Note that the optimal solution of an LP problem can always
be found in a vertex and a vertex is always formed by a set
of $d$ independent facets ($d$ is the dimension of the 
polytope). The facet pivot algorithm starts from an initial facet base 
which is composed of $d$ facets (the constraints), it then finds
an entering facet and a leaving facet to form a new base.
Therefore, every iterate improves feasibility by replacing one 
facet (constraint) with a different facet (constraint), 
and all iterates are facet basic (to be defined later) but not 
feasible until an optimal solution is found. For canonical form 
of the LP problem, assuming one knows the optimal solution 
and starts from the most distant vertex, it needs at least
$n-d$ iterations for vertex simplex method to reach the optimal 
solution \cite{santos12} which is equal to the diameter of the 
convex polytope. As the number of inequality constraints $n$
goes to infinite, the iteration number goes to infinite (this is true
for both primal and dual vertex simplex methods). But if one 
knows the optimal solution which is a vertex of the intersection
of $d$ facets, it needs at most $d$ iterations for facet pivot method
to reach the optimal solution no matter how many inequality 
constraints are.

Another obvious computational advantage of using the general 
formulation considered by the facet pivot method is that 
one does not need to convert inequality constraints into equality 
constraints which avoids introducing unnecessary 
slack variables. Using this formulation, the problem size is
significantly smaller than the problem size of the standard LP problem,
especially when the original problem has many inequality constraints. 
It is also more natural to handle lower and upper bound constraints 
and free variables. Moreover, the facet pivot method with the general
form does no need a possibly very expensive Phase I process (the 
Phase I process of dual vertex simplex method may be cheap for
some problems but not always) to find a starting point.

Notice that, by duality, the facets of the primal problem
are the vertices of the dual problem, the facet pivot method may have
some connections to the dual vertex pivot method, but the former involves 
a smaller problem (primal problem is smaller than primal plus dual problem).
This means that the facet pivot method is conceptually much simpler 
because it does not rely on duality. In section \ref{testSection},
we will demonstrate by numerical test that the facet pivot method
is more attractive than the dual vertex pivot method, at least for the
tested Netlib problems. Although the facet pivot simplex 
method is not a dual simplex method, to find some possible relationship may 
be an interesting future research topic beyond the scope of this paper.

This paper adopts the ideas of \cite{ltz21}, improves
the results, and devises several algorithms that are suitable 
for computer programs rather than tabular procedures.  
In addition, we provide a series of technical results to 
show that these algorithms are well defined, and they find 
the optimal solution in finite iterations. We implement the Dantzig's simplex algorithm, two
facet simplex algorithms, a dual simplex algorithm, and two interior-point algorithms
as Matlab functions and report test results for various benchmark
problems, including a small set of known cycling LP problems
\cite{yang21}, three variants of Klee-Minty problems 
\cite{greenberg97,km11,ibrahima13} with different sizes, 
randomly generated standard and general LP problems with different distributions, 
and the Netlib benchmark problems \cite{bdgr95}. These preliminary
tests demonstrate the efficiency and effectiveness of the 
facet pivot simplex algorithm. It shows that the facet pivot simplex 
method is very promising compared to Dantzig's simplex and
dual simplex algorithms for general LP problems.

In the remainder of this paper, small letters with bold 
font represent vectors, capital letters with bold font represent 
matrices, and letters with normal font represent scalars. 
For a vector $\x$, we denote by $x_i$ its $i$-th component. 
Also, we use superscript $k$ to denote the iteration count.
Therefore, the scalar, vector, set, and matrix at the $k$-th 
iteration are denoted, for example, as $x_i^k$, $\x^k$, $B^k$,
and $\A_{B^k}$. To save space, we write the stacked column vector 
$\x =[\x_1^{\Tr}, \x_2^{\Tr}]^{\Tr}$ as $\x=(\x_1, \x_2)$,
and stacked matrices $\A=\left[ \A_1^{\Tr}, \A_2^{\Tr} \right]^{\Tr}$
as $\A= \left( \A_1,\A_2 \right)$.
The remainder of the paper is organized as follows.
Section 2 describes the standard general form of the linear 
programming problems. Section 3 presents some mathematical results
that will be used to justify the development of the facet pivot 
simplex algorithm. Section 4 provides the detailed steps of the facet
pivot simplex algorithm and the reasoning behind every step, 
including proof of the claim that the algorithm finds 
the optimal solution in finite steps. 
Section 5 discusses some important implementation
details for readers who are interested in repeating the reported
results given in this paper. Section 6 concludes the paper
with some remarks.

\section{The standard general form of the LP problem}
\label{proposedAlgo}

We consider the general linear programming problem presented
as follows:
\begin{subequations}
\begin{align}
\min \hspace{0.5in}  &  \c^{\Tr}\x, \label{1a} \\ 
\mbox{\rm subject to} \hspace{0.3in}
&  \A^I \x = \b^I,  \label{1b} \\
&  \A^J\x \ge \b^J,  \label{1c} \\
& \u \ge \x \ge {\Bell},   \label{1d} 
\end{align}
\label{generalLP}
\end{subequations}
where $\A^I \in {\R}^{m \times d}$, $\A^J \in {\R}^{n \times d}$, 
$\b^I  \in {\R}^{m}$, $\b^J  \in {\R}^{n}$, $\Bell \in {\R}^{d}$,
$\u \in {\R}^{d}$, and $\c \in {\R}^{d}$ are given matrices and
vectors respectively. Vector $\x \in {\R}^d$ is composed of variables 
to be optimized. We say that an $\x$ is a {\tt feasible} solution of
LP if $\x$ satisfies all the constraints of ({\ref{generalLP}). We refer 
(\ref{generalLP}) as the general form of LP because the standard 
form of the LP problem
\begin{eqnarray}
\begin{array}{cl}
\min  &  \c^{\Tr}\x, \\ 
\mbox{\rm subject to} 
&  \A^I\x=\b^I, \\
& \x \ge \0 ,  
\end{array}
\label{standardLP}
\end{eqnarray}
and the canonical form of the LP problem
\begin{eqnarray}
\begin{array}{cl}
\min  &  \c^{\Tr}\x, \\ 
\mbox{\rm subject to} 
&  \A^J \x \ge \b^J, \\
& \x \ge \0 ,  
\end{array}
\label{canonicalLP}
\end{eqnarray}
are special cases of the general form of the LP. Clearly, all LP 
problems can be written as either a standard form which is useful 
for a unified theoretical analysis of the vertex pivot simplex method, 
or a canonical form which is a better description from a geometric 
point of view (convex polytope) for the LP problem (every row 
of the constraint forms a facet of the convex polytope). But using 
the general form of the LP, as we will show, 
normally leads to a more efficient and 
convenient (without introducing many relaxation variables and
performing extra conversion work) algorithm for the 
general LP problems which are frequently met in real applications,
for example, the facet enumeration problem \cite{yang19}.
We emphasize that although the general  
form of LP can be converted to either standard LP or canonical LP,
it is better solved by directly applying the facet pivot simplex method rather than other simplex methods that require conversions.

\begin{remark}
Let $x_i$, $i=1, \ldots, d$, be the $i$-th variable 
of $\x$. For a free or a constrained on one side variable $x_i$,
we can always represent it as the general form. For example, 
if $x_i$ is a free variable, we can add two trivial constraints
as $-M \le x_i \le M$, where $M$ is a big positive constant; 
if $x_i \ge 0$, we can add a trivial constraint so that it meets 
$M \ge x_i \ge 0$; if $x_i \le 0$, we can add a trivial constraint 
so that it meets $-M \le x_i \le 0$. This conversion seems to increase 
the complexity of the problem, but as it will be seen, it provides a convenient
initial basic solution which avoids the expensive Phase 1 computation
in the traditional simplex method.
\label{1stRemark}
\end{remark}

From Remark \ref{1stRemark}, one can always write a
given LP problem as the general form of (\ref{generalLP}). 
Let $c_k$, $k=1, \ldots,d$, be the $k$-th component of $\c$; 
$\a_i$, $i=1, \ldots, m$, be the $i$-th row (facet) of $\A^I$;
$\a_j$, $j=m+1, \ldots, m+n$, be the $j$-th row (facet) 
of $\A^J$. Similar notations are used for $b_i$ with 
$i=1, \ldots, m$; $b_j$ with $j=m+1, \ldots, m+n$; 
$\ell_k$ with $k=m+n+1, \ldots, m+n+d$;
and $u_k$ with $k=m+n+d+1, \ldots, m+n+2d$. 
Noticing that $\u \ge \x \iff -\x \ge -\u$ (where $\iff$ means
equivalent), we denote
\begin{subequations}
\begin{align}
\sigma_i (\x) =\a_i\x-b_i, & \hspace{0.1in} i=1, \ldots, m, \\
\sigma_j (\x) =\a_j\x- b_j, &\hspace{0.1in} j=m+1, \ldots, m+n, \\
\underline{\sigma}_k (\x) = x_k- \ell_k, & 
\hspace{0.1in} k=m+n+1, \ldots, m+n+d, \\
\overline{\sigma}_k (\x) =- x_k+ u_k, & \hspace{0.1in} 
k=m+n+d+1, \ldots, =m+n+2d, 
\end{align}
\label{sigma}
\end{subequations}
which will be used to measure the constraint violations.
To keep the notation simple, we often omit $\x$ and simply use 
$\sigma_i$, $\sigma_j$, $\underline{\sigma}_k$, and $\overline{\sigma}_k$,
but remember that they are functions of $\x$.


Let $\e_i^{\T}$ be the $i$-th row of the $d$-dimensional identity 
matrix $\I$. The {\tt standard general} form LP is given as follows:
\begin{subequations}
\begin{align}
\min  \hspace{0.5in}  &  \sum_{i=1}^{d}
\bar{c}_i \bar{\e}_i^{\Tr}\x, \hspace{0.1in} \bar{c}_i \ge \0 
 \label{5a}  \\ 
\mbox{\rm subject to}  \hspace{0.3in} 
&  \A^I \x=\b^I, \label{5b}  \\
&  \A^J\x \ge \b^J, \label{5c}   \\
&  \E \x \ge {\b_L}, \label{5d}   \\
&  \F \x \ge {\b_U}, \label{5e}  
\end{align}
\label{stdGeneralLP}
\end{subequations}
where (\ref{5b}) and (\ref{5c}) are the same as  (\ref{1b}) 
and (\ref{1c}); some conversions from (\ref{generalLP}) to
(\ref{stdGeneralLP}) will be needed for (\ref{1a})
and (\ref{1d}), which are described as follows.
The $i$-th row (or the facet) of 
$\E$ is denoted as $\bar{\e}_i^{\Tr}$ which is 
either $\e_i^{\T}$ or $-\e_i^{\T}$ depending on the sign of $c_i$: 
(i) if $c_i \ge 0$, then, $\bar{c}_i=c_i$, $\bar{\e}_i^{\Tr}=\e_i^{\Tr}$,
we set $b_{L_i} \leftarrow \ell_i$ in (\ref{5d}) to represent 
$x_i \ge \ell_i$; the $i$-th row (facet) of $\F$ is given as 
${\f}_i^{\Tr}=-\e_i^{\Tr}$ and set $b_{U_i} \leftarrow (-u_i)$ to 
represent the constraint $-x_i \ge -{u_i}$ in (\ref{5e}); 
and (ii) if $-c_i > 0$, then, this item in the objective function can be 
written as $c_i x_i=(-c_i) (-\e_i^{\T})\x=\bar{c}_i \bar{\e}_i^{\T}\x$, 
we set $\bar{c}_i \leftarrow (-c_i)$ in the objective function
(\ref{5a}), and  rewrite the corresponding inequalities in (\ref{1d}) 
as $-u_i \le -x_i \le -\ell_i$ or 
$-u_{i} \le -\e_{i}^{\T}\x \le  -\ell_{i}$, therefore, we set
$\b_{L_i} \leftarrow (-u_i)$ to represent $-\e_{i}^{\T}\x \ge -u_i$ 
in (\ref{5d}), and $\f_i^{\T}=\e_i^{\T}$ and $b_{U_i}  \leftarrow \ell_i$ 
to represent $x_i \ge \ell_i$ in (\ref{5e}). This completes 
the process of converting the general form LP to the standard 
general form LP of (\ref{stdGeneralLP}). To keep the notation simple, 
we will use $c_i$ for $\bar{c}_i$ in the remainder 
of the paper, and we discuss the standard general LP. Let 
\begin{equation}
\A=\left[ \begin{array}{c}  \A^I \\ \A^J \\ \E \\ \F \end{array} \right],
\hspace{0.2in} \b =\left[ \begin{array}{c}  \b^I \\ \b^J \\ \b_L \\ \b_U
\end{array} \right],
\label{Amatrix}
\end{equation}
where $\E \in \R^{d \times d}$ and $\F \in \R^{d \times d}$
are diagonal and full rank matrices. 
We denote by $\A_B$, a sub-matrix of $\A$, which is composed of $d$ 
linear independent row vectors of $\A$, as a {\tt base} of $\A$.
The rows (facets) of $\A_B$ are named as basic facets.
We denote by $\A_N$, which is composed of the remaining $m+n+d$ row
(facet) vectors of $\A$, as a {\tt non-base} sub-matrix of $\A$.
The rows (facets) of $\A_N$ are named as non-basic rows (facets). Similarly,
we denote by $\b_B$, a $d$-dimensional sub-vector of $\b$, 
corresponding to the rows of $\A_B$; and by $\b_N$ a $(m+n+d)$
-dimensional sub-vector of $\b$, corresponding to the rows of $\A_N$. 
We say that a vector $\x$ is a {\tt basic} solution of ({\ref{stdGeneralLP})
if it satisfies 
\begin{equation}
\A_B\x=\b_B. 
\label{basicS}
\end{equation}
Similarly, we say that a vector $\x$ is a 
{\tt basic feasible} solution of Problem ({\ref{stdGeneralLP}) if $\x$ 
is both a basic and a feasible solution. Another good feature of the 
standard general form is that we may choose the rows (facets) of $\E$ 
as the initial base, i.e., $\A_{B^0}=\E$. It is worthwhile to note 
that $\A_{B^0}=\E$ corresponds to the inequality constraints, and
$c_i$, $i=1,\ldots,d$, are the coefficients such that 
$\c=\sum_{i=1,\ldots,d} c_i \bar{\e}_i$, moreover, $c_i^0=c_i \ge 0$.
In addition, the initial basic solution $\x^0$ can easily be obtained by solving 
$\E \x^0 = {\b_L}$ as $\E$ is a diagonal matrix and all diagonal
elements are either $1$ or $-1$. 

A possibly very expensive step in the traditional vertex pivot simplex method 
is the so-called Phase-I step, which is aimed at finding a feasible initial 
point. By rewriting the general LP problem to the {\tt standard general} 
form LP, the facet pivot simplex method does not need this 
additional step. The key ideas are to (a) represent objective vector 
$\c$ as the linear combination of the basic facets $\A_{B^k}$, i.e.,
$\c^{\T}=\sum_{i \in B^k} y_{ci}^k \a_i$ at the $k$-th iteration, 
and (b) keep $y_{ci}^k \ge 0$ ($y_{ci}^k$ corresponds to the $i$-th 
facet of the base at the $k$-th iteration) 
for all $i$ and $k$ in the base of the $k$-th iteration. 
We will see that keeping $y_{ci}^k \ge 0$ for all inequality constraints 
in the base for all iterations $k$ is very important for the use of
Farkas Lemma to justify the facet pivot simplex algorithm.


Without the loss of the generality, we make the following assumptions
throughout this paper.

\vspace{0.1in}
{\bf Assumptions}
\begin{itemize}
\item[1.] $rank(\A^I) < d$.
\item[2.] $rank(\A) = d$.
\end{itemize}
The first assumption means that the optimization problem is not trivial,
otherwise the feasible solution is either unique or does not exist. 
The second assumption is not necessary but makes the discussion simple.

\section{Optimality theorems for the facet pivot simplex method}

The facet pivot simplex method is based on Farkas Lemma which is presented
as follows (see reference \cite[Lemma 1.1]{yang20}).

\begin{theorem}[Farkas Lemma]
Let $\c \in \R^{d}$, $\x \in \R^{d}$, $\y \in \R^{n}$, and
$\A \in \R^{n \times d}$. Then, exactly one of the following 
systems holds.
\begin{itemize}
\item[(i)] $\A \x \ge \0$ and $\c^{\Tr} \x <0$.
\item[(ii)] $\A^{\Tr} \y =\c$ and $\y \ge \0$.
\end{itemize}
\label{farkas}
\end{theorem}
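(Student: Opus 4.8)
The plan is to prove the two halves of the dichotomy separately: first that (i) and (ii) cannot both hold, then that at least one of them must hold.

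\textbf{At most one holds.} Suppose, toward a contradiction, that $\x$ satisfies (i) and $\y$ satisfies (ii). Then
\[
\c^{\Tr}\x = (\A^{\Tr}\y)^{\Tr}\x = \y^{\Tr}(\A\x) \ge 0,
\]
since $\y \ge \0$ and $\A\x \ge \0$ force this inner product to be nonnegative, contradicting $\c^{\Tr}\x < 0$. Hence the two systems are mutually exclusive.

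\textbf{At least one holds.} Assume (ii) fails, i.e. $\c$ does not lie in the cone $C := \{\A^{\Tr}\y : \y \ge \0\} \subseteq \R^{d}$; the goal is to produce an $\x$ satisfying (i). The plan is a separating-hyperplane argument. Granting that $C$ is a closed convex set, let $\p$ be the Euclidean projection of $\c$ onto $C$ and set $\x := \p - \c \ne \0$. The projection inequality $(\z-\p)^{\Tr}(\c-\p)\le 0$ for all $\z \in C$, combined with $\0 \in C$ and $2\p \in C$ (because $C$ is a cone), yields $\p^{\Tr}(\c-\p)=0$ and hence $\z^{\Tr}(\c-\p)\le 0$ for every $\z \in C$. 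Letting $\z$ range over the generators of $C$, namely the columns of $\A^{\Tr}$, gives $\A(\c-\p)\le \0$, i.e. $\A\x \ge \0$; and a short computation using $\p^{\Tr}\c = \|\p\|^{2}$ gives $\c^{\Tr}\x = \|\p\|^{2}-\|\c\|^{2} = -\|\p-\c\|^{2} < 0$. So $\x$ satisfies (i).

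\textbf{Main obstacle.} The one genuinely nontrivial ingredient is the claim that $C$ is closed, which is not obvious for an arbitrary finitely generated cone. I would establish it by a Carath\'eodory-type reduction: every point of $C$ is a nonnegative combination of some linearly independent subset of the columns of $\A^{\Tr}$, so $C$ equals the union of the finitely many cones generated by such subsets; each of these is the image of an orthant under an injective linear map, hence closed, and a finite union of closed sets is closed. An alternative route that avoids topology altogether is Fourier--Motzkin elimination applied to $\A^{\Tr}\y = \c$, $\y \ge \0$: eliminating the entries of $\y$ one at a time produces an equivalent system of linear inequalities in the entries of $\c$ alone, and its infeasibility is certified by an explicit nonnegative combination of rows, which is precisely the vector $\x$ of (i). Either way, once closedness (or the elimination) is secured, the remaining steps are the routine computations indicated above.
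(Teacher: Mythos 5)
Your proof is correct. Note that the paper does not actually prove this theorem at all: it simply cites an external reference for it, so there is no internal argument to compare against. Your write-up supplies the classical self-contained proof: the easy direction (mutual exclusivity via $\c^{\Tr}\x = \y^{\Tr}\A\x \ge 0$) is right, and the hard direction via projection onto the finitely generated cone $C=\{\A^{\Tr}\y : \y\ge\0\}$ is carried out correctly, including the identity $\p^{\Tr}(\c-\p)=0$ obtained from testing the projection inequality at $\z=\0$ and $\z=2\p$, the resulting sign computation $\c^{\Tr}\x=-\|\p-\c\|^2<0$, and the verification $\A\x\ge\0$ from the generators. You also correctly identify the one genuinely delicate point, the closedness of $C$, and your Carath\'eodory-type reduction (every point of $C$ lies in a subcone generated by linearly independent columns of $\A^{\Tr}$, each such subcone being the image of an orthant under an injective linear map and hence closed, and a finite union of closed sets being closed) is a valid way to dispose of it; the Fourier--Motzkin alternative you mention would also work. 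No gaps.
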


Let $\mathcal{I}$ be the index set of all (including lower and
upper boundary) inequality constraints in (\ref{stdGeneralLP}), 
$\mathcal{E}$ be the index set of all equality 
constraints in (\ref{stdGeneralLP}), $\mathcal{B}$ be the
index set of the base $B$, and $\mathcal{N}$ be the
index set of the non-base $N$. We will use $\A_{I}$, $\A_{E}$, 
$\A_{B}$, and $\A_{N}$ to denote sub-matrices corresponding 
to the index sets $\mathcal{I}$, $\mathcal{E}$, $\mathcal{B}$, 
and $\mathcal{N}$ respectively. We will use the same partitions 
for $\b_{I}$, $\b_{E}$, $\b_{B}$, and $\b_{N}$. Therefore,
\[
\A_{I}=\left[ \begin{array}{c}  \A^J \\ \E \\ \F \end{array} \right],
\hspace{0.2in} \b_{I} =\left[ \begin{array}{c}  \b^J \\ \b_L \\ \b_U
\end{array} \right],
\]
$\A_{E}=\A^I$, $\b_{E}=\b^I$, and the {\tt standard general} form LP
can be written as follows:
\begin{subequations}
\begin{align}
\min  \hspace{0.5in}  &  \sum_{i=1}^{d}
{c}_i \bar{\e}_i^{\Tr}\x, \hspace{0.1in} c_i \geq \0 
 \label{6a}  \\ 
\mbox{\rm subject to}  \hspace{0.3in} 
&  \A_E \x=\b_E, \label{6b}  \\
&  \A_I \x \geq \b_I. \label{6c}   
\end{align}
\label{stdGeneralLP1}
\end{subequations}

Denote by $\mathcal{I}_0$ and $\mathcal{E}_0$ as the index 
subsets of $\mathcal{I}$ and $\mathcal{E}$ respectively such that 
$\mathcal{I}_0 \cup \mathcal{E}_0$ forms the index set of 
the base $B$, i.e., $\mathcal{B}=\mathcal{I}_0 \cup \mathcal{E}_0$;
by $\mathcal{I}_1 =\mathcal{I} \setminus \mathcal{I}_0$
the index subset of inequality constraints not in the base $B$,
and by $\mathcal{E}_1 =\mathcal{E} \setminus \mathcal{E}_0$
the index subset of equality constraints not in the base $B$.
Clearly, we have $\mathcal{N}=\mathcal{I}_1 \cup \mathcal{E}_1$.
Finally, we denote by $\A_{I_0}$, $\A_{E_0}$, $\A_{I_1}$, 
$\A_{E_1}$ the sub-matrices corresponding to $\mathcal{I}_0$,
$\mathcal{E}_0$, $\mathcal{I}_1$, and $\mathcal{E}_1$; 
and by $\b_{I_0}$, $\b_{E_0}$, $\b_{I_1}$, and $\b_{E_1}$
for the same partitions of $\b$.

The following theorem (see \cite[Corollary 1]{ltz21}), which can 
easily be derived from Farkas Lemma, is useful in the development 
of the facet pivot simplex algorithm. Denote 
$\A_B=\left[ \begin{array}{c} \A_{E_0} \\ 
\A_{I_0} \end{array} \right] \in \R^{d \times d}$. 
Let vectors $\y=\left[ \begin{array}{c} 
\y_{E_0} \\ \y_{I_0} \end{array} \right]  \in \R^{d}$ 
and $\b_B=\left[ \begin{array}{c} \b_{E_0} \\ \b_{I_0} 
\end{array} \right]  \in \R^{d}$ be partitioned such that
the row (facet) indices of $\A_{E_0}$, $\y_{E_0}$, and
$\b_{E_0}$ are identical, and the row (facet) indices of 
$\A_{I_0}$, $\y_{I_0}$, and $\b_{I_0}$ are identical.

\begin{theorem}[\cite{ltz21}]
Let $\x \in \R^{d}$ be a basic solution of (\ref{stdGeneralLP1}), 
i.e., $\A_B \x =\b_B$. Let $\bar{\x} \in \R^{d}$ be a 
feasible solution of (\ref{stdGeneralLP1}),
i.e., $\A_{E} \bar{\x}  = \b_{E}$ and $\A_{I} \bar{\x}  \geq \b_{I}$. 
Then, exactly one of the following systems holds but not both.
\begin{itemize}
\item[(i)] $\A_{I_0} \x \geq \b_{I_0}$, $\A_{E_0} \x = \b_{E_0}$,
and $\a_{r} \x <\a_r \bar{\x}$.
\item[(ii)] $\A_{E_0}^{\T}\y_{E_0}-\A_{I_0}^{\T}\y_{I_0} =\a_r^{\T}$ 
and $\y_{I_0} \geq \0$.
\end{itemize}
\label{farkasD}
\end{theorem}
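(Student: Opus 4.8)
The plan is to derive Theorem~\ref{farkasD} from the Farkas Lemma (Theorem~\ref{farkas}) by a change of variables that shifts the basic solution $\x$ to the origin and converts the equality constraints $\A_{E_0}\x = \b_{E_0}$ into sign-free ``$\ge$'' pairs. First I would substitute $\z = \x - \bar{\x}$, where $\bar{\x}$ is the given feasible point; since $\A_E\bar{\x} = \b_E$ and $\A_I\bar{\x}\ge\b_I$, the shifted basic constraints $\A_{E_0}\x=\b_{E_0}$, $\A_{I_0}\x\ge\b_{I_0}$ become $\A_{E_0}\z = \b_{E_0}-\A_{E_0}\bar{\x}$ and $\A_{I_0}\z \ge \b_{I_0}-\A_{I_0}\bar{\x} =: -\w$ with $\w\le\0$ componentwise wrong sign --- actually one should be slightly careful here, so let me instead keep the homogenization route: the cleanest reduction is to note that system (i) asserts the \emph{consistency} of $\A_{I_0}\x\ge\b_{I_0}$, $\A_{E_0}\x=\b_{E_0}$ together with the strict inequality $\a_r\x < \a_r\bar{\x}$, whereas system (ii) is a certificate of inconsistency. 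I would phrase this as: (i) fails for \emph{every} such $\x$ if and only if the implication ``$\A_{I_0}\x\ge\b_{I_0}$ and $\A_{E_0}\x=\b_{E_0}$ $\Rightarrow$ $\a_r\x\ge\a_r\bar{\x}$'' holds.

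The key step is then an affine Farkas / linear-programming-duality argument applied to that implication. Writing the equality $\A_{E_0}\x=\b_{E_0}$ as the conjunction of $\A_{E_0}\x\ge\b_{E_0}$ and $-\A_{E_0}\x\ge-\b_{E_0}$, the premise becomes a single polyhedral system $\M\x\ge\p$ with $\M = \bigl[\A_{E_0}^{\T},\,-\A_{E_0}^{\T},\,\A_{I_0}^{\T}\bigr]^{\T}$ and the corresponding $\p$. By the affine form of Farkas' lemma (equivalently, by applying Theorem~\ref{farkas} to the homogenized system obtained by adjoining the row $(\b_B^{\T}, \ldots)$ and a slack coordinate), the implication $\M\x\ge\p \Rightarrow \a_r\x \ge \a_r\bar{\x}$ holds precisely when there exist multipliers $\lambda^+\ge\0$, $\lambda^-\ge\0$, $\y_{I_0}\ge\0$ with $\A_{E_0}^{\T}(\lambda^+-\lambda^-) + \A_{I_0}^{\T}\y_{I_0}^{\,(\mathrm{wrong\ sign})}$; here I must track the sign convention in the theorem statement, where (ii) reads $\A_{E_0}^{\T}\y_{E_0} - \A_{I_0}^{\T}\y_{I_0} = \a_r^{\T}$. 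Setting $\y_{E_0} = \lambda^+-\lambda^-$ (free, as it should be, since it pairs with equality rows) and absorbing the sign into $\y_{I_0}$, I recover exactly $\A_{E_0}^{\T}\y_{E_0} - \A_{I_0}^{\T}\y_{I_0} = \a_r^{\T}$ with $\y_{I_0}\ge\0$, together with the scalar feasibility condition that the constant terms match --- and this last condition is automatically consistent because $\x$ is assumed to be a \emph{basic} solution, $\A_B\x=\b_B$, so $\a_r$ lies in the row space of $\A_B = [\A_{E_0}^{\T},\A_{I_0}^{\T}]^{\T}$ once one is in case (ii), which is precisely why $\A_B$ being $d\times d$ and full rank is invoked in the setup. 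Finally I would check mutual exclusivity directly: if both held, then $\a_r\bar{\x} > \a_r\x = \y_{E_0}^{\T}\A_{E_0}\x - \y_{I_0}^{\T}\A_{I_0}\x = \y_{E_0}^{\T}\b_{E_0} - \y_{I_0}^{\T}(\A_{I_0}\x)$, and using $\A_{E_0}\bar\x=\b_{E_0}$, $\y_{I_0}\ge\0$, $\A_{I_0}\x\ge\b_{I_0}$ versus $\A_{I_0}\bar\x\ge\b_{I_0}$ yields a contradiction by a short chain of inequalities.

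The main obstacle I anticipate is bookkeeping the sign conventions and the affine (inhomogeneous) terms: Theorem~\ref{farkas} as stated is the \emph{homogeneous} Farkas lemma ($\A\x\ge\0$ versus $\A^{\T}\y=\c$, $\y\ge\0$), so I need to homogenize the inhomogeneous systems in (i) correctly --- the standard trick is to append an extra variable $t\ge 0$ and replace $\b$ by $t\,\b$, then argue about the cone generated, and recover the affine statement by examining the $t=1$ slice. Making that reduction airtight while simultaneously handling the equality rows (which must give \emph{free} multipliers $\y_{E_0}$) and the nonstandard sign $-\A_{I_0}^{\T}\y_{I_0}$ in alternative (ii) is the delicate part; everything else is routine linear algebra. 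I would present the homogenization lemma once, cleanly, and then the rest of the argument is a direct translation.
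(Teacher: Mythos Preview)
Your instinct to substitute $\z = \x - \bar{\x}$ was exactly right, and that is precisely what the paper does --- but you abandoned it one line too early. The point you missed is that $\x$ is a \emph{basic} solution, so $\A_B\x = \b_B$ holds with \emph{equality}; in particular $\A_{I_0}\x = \b_{I_0}$, not merely $\A_{I_0}\x \ge \b_{I_0}$. Combined with the feasibility of $\bar{\x}$ (which gives $\A_{E_0}\bar{\x} = \b_{E_0}$ and $\A_{I_0}\bar{\x}\ge\b_{I_0}$), the shift $\z = \x - \bar{\x}$ produces the fully \emph{homogeneous} system
\[
\A_{E_0}\z \ge \0,\qquad -\A_{E_0}\z \ge \0,\qquad -\A_{I_0}\z \ge \0,\qquad \a_r\z < 0,
\]
to which the homogeneous Farkas Lemma (Theorem~\ref{farkas}) applies directly with the block matrix $[\A_{E_0}^{\T},\,-\A_{E_0}^{\T},\,-\A_{I_0}^{\T}]$ playing the role of $\A^{\T}$ and $\a_r^{\T}$ playing the role of $\c$. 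The dual alternative then reads $\A_{E_0}^{\T}(\y_1 - \y_2) - \A_{I_0}^{\T}\y_{I_0} = \a_r^{\T}$ with $\y_1,\y_2,\y_{I_0}\ge\0$, and setting $\y_{E_0} := \y_1 - \y_2$ gives system~(ii). No affine Farkas, no auxiliary homogenizing variable $t$, no bookkeeping of constant terms is needed.

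Your proposed route through an affine Farkas lemma and an added coordinate would also go through, and your direct check of mutual exclusivity is sound; but it is a detour forced by reading the $\A_{I_0}$-row of system~(i) as a genuine inequality rather than as the equality it already is under the hypothesis $\A_B\x=\b_B$. Once that is observed, the paper's argument is essentially three lines.
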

\begin{proof}
Since $\A_{E_0} \x = \b_{E_0}$ is equivalent to 
$\b_{E_0} \geq \A_{E_0} \x \geq \b_{E_0}$, which can be written as
$\A_{E_0} \x \geq \b_{E_0}$ and $-\A_{E_0} \x \geq -\b_{E_0}$.
Using the fact that $\A_{E_0} \bar{\x}  = \b_{E_0}$, we have
$\A_{E_0} (\x- \bar{\x} ) \geq \0$ and 
$-\A_{E_0} (\x- \bar{\x} ) \geq \0$. Combining
$\A_{I_0}{\x}  = \b_{I_0}$
and $\A_{I_0} \bar{\x}  \geq \b_{I_0}$, we have
$-\A_{I_0} (\x- \bar{\x} ) \geq \0$. Therefore, system (i) is equivalent to
\begin{subequations}
\begin{align}
\A_{E_0} (\x- \bar{\x} ) \geq \0,  \\
-\A_{E_0} (\x- \bar{\x} ) \geq \0, \\
-\A_{I_0} (\x- \bar{\x} ) \geq \0, \\
\a_{r} (\x- \bar{\x} )   < 0.
\end{align}
\label{equivI}
\end{subequations} 
Let $\A^{\T}=[ \A_{E_0}^{\T},~ -\A_{E_0}^{\T},~ -\A_{I_0}^{\T}]$
and $\c^{\T}=\a_{r}$, then, system (i) of this theorem is equivalent
to the relations of (\ref{equivI}), which is the same as system (i) of the Farkas 
Lemma. System (ii) of the Farkas Lemma ($\A^{\Tr} \y =\c$ and 
$\y \ge \0$)  is equivalent to 
\begin{eqnarray}
\A^{\T} \y &=&
[ \A_{E_0}^{\T},~ -\A_{E_0}^{\T},~ -\A_{I_0}^{\T}]
\left[ \begin{array}{c} \y_1 \\ \y_2 \\ \y_{I_0} \end{array} \right]
\nonumber \\
&=&  \A_{E_0}^{\T} (\y_1-\y_2) - \A_{I_0}^{\T} \y_{I_0}
: = \A_{E_0}^{\T}  \y_{E_0} - \A_{I_0}^{\T} \y_{I_0} = \a_r^{\T}
\end{eqnarray}
with $\y_1-\y_2=\y_{E_0}$ and $ \y_{I_0} \ge \0$, which is system 
(ii) of this theorem. 
\hfill \qed
\end{proof}

\begin{corollary}
Let $\x \in \R^{d}$ be a basic solution of (\ref{stdGeneralLP1}), 
i.e., $\A_B \x =\b_B$. Let $\bar{\x} \in \R^{d}$ be a 
feasible solution of (\ref{stdGeneralLP1}),
i.e., $\A_{E} \bar{\x}  = \b_{E}$ and $\A_{I} \bar{\x}  \ge \b_{I}$. 
Then, exactly one of the following systems holds but not both.
\begin{itemize}
\item[(i)] $\A_{I_0} \x \ge \b_{I_0}$, $\A_{E_0} \x = \b_{E_0}$,
and $\a_{r} \x >\a_r \bar{\x}$.
\item[(ii)] $\A_{E_0}^{\T}\y_{E_0}+\A_{I_0}^{\T}\y_{I_0} =\a_r^{\T}$ 
and $\y_{I_0} \ge \0$.
\end{itemize}
\label{farkasC}
\end{corollary}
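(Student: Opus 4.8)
The plan is to deduce the corollary from Theorem \ref{farkasD} by the substitution $\a_r \mapsto -\a_r$, exploiting the fact that the equality‑constraint multiplier block $\y_{E_0}$ carries no sign restriction. Concretely, I would apply Theorem \ref{farkasD} to the same basic solution $\x$, the same feasible solution $\bar{\x}$, and the same base partition $\mathcal{B}=\mathcal{I}_0\cup\mathcal{E}_0$, but with the row vector $\a_r$ replaced by $-\a_r$. Then system (i) of Theorem \ref{farkasD} reads $\A_{I_0}\x \ge \b_{I_0}$, $\A_{E_0}\x = \b_{E_0}$, and $(-\a_r)\x < (-\a_r)\bar{\x}$; the last inequality is literally $\a_r\x > \a_r\bar{\x}$, so this is exactly system (i) of the corollary.

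For system (ii): Theorem \ref{farkasD} applied to $-\a_r$ yields $\A_{E_0}^{\T}\y_{E_0} - \A_{I_0}^{\T}\y_{I_0} = -\a_r^{\T}$ with $\y_{I_0} \ge \0$. Multiplying through by $-1$ gives $-\A_{E_0}^{\T}\y_{E_0} + \A_{I_0}^{\T}\y_{I_0} = \a_r^{\T}$, and since $\y_{E_0}$ is unrestricted in sign (in the proof of Theorem \ref{farkasD} it appears as $\y_1 - \y_2$ with $\y_1,\y_2 \ge \0$ otherwise arbitrary), I may replace $\y_{E_0}$ by $-\y_{E_0}$ and rename, obtaining $\A_{E_0}^{\T}\y_{E_0} + \A_{I_0}^{\T}\y_{I_0} = \a_r^{\T}$ with $\y_{I_0} \ge \0$ — exactly system (ii) of the corollary. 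Because Theorem \ref{farkasD} asserts that exactly one of its two systems holds but not both, the same dichotomy transfers verbatim to the corollary.

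Alternatively, for a self‑contained argument I would rerun the reduction to Farkas Lemma directly, mirroring the proof of Theorem \ref{farkasD}: rewrite $\A_{E_0}\x=\b_{E_0}$ as two inequalities, use $\A_{E_0}\bar{\x}=\b_{E_0}$, $\A_{I_0}\bar{\x}\ge\b_{I_0}$ and $\A_{I_0}\x=\b_{I_0}$ to see that system (i) of the corollary is equivalent to $\A_{E_0}(\x-\bar{\x})\ge\0$, $-\A_{E_0}(\x-\bar{\x})\ge\0$, $-\A_{I_0}(\x-\bar{\x})\ge\0$, and $(-\a_r)(\x-\bar{\x})<0$; then set $\A^{\T}=[\A_{E_0}^{\T},\,-\A_{E_0}^{\T},\,-\A_{I_0}^{\T}]$ and $\c^{\T}=-\a_r$ and invoke Theorem \ref{farkas}, collapsing the first two multipliers into one free vector as before.

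I do not expect a genuine obstacle; the whole content is careful sign bookkeeping. The one point deserving an explicit sentence is why flipping the sign of $\y_{E_0}$ is legitimate — namely that it is the multiplier of an equality constraint and hence unconstrained in sign — so that the reader is not left wondering whether the ``$+$'' in front of $\A_{I_0}^{\T}\y_{I_0}$ together with $\y_{I_0}\ge\0$ is consistent with the corresponding ``$-$'' in Theorem \ref{farkasD}.
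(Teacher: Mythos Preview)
Your proposal is correct. Your primary approach --- deducing the corollary from Theorem~\ref{farkasD} via the substitution $\a_r \mapsto -\a_r$ and then absorbing the resulting sign into the free multiplier $\y_{E_0}$ --- is a genuinely shorter route than the paper takes; it exploits the already-proved dichotomy rather than rebuilding it, and the one sentence you flag (why $\y_{E_0}$ may be replaced by $-\y_{E_0}$) is exactly the point that needs saying. The paper instead does essentially what you sketch as your \emph{alternative}: it reruns the reduction to Farkas' Lemma from scratch, but with the change of variable $\bar{\x}-\x$ in place of your $\x-\bar{\x}$, so that the $\A_{I_0}$ block appears with a $+$ sign directly and $\c^{\T}=\a_r$ rather than $-\a_r$. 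The two direct reductions are trivially equivalent; your substitution argument buys brevity at no cost in rigor, while the paper's rerun keeps the corollary self-contained.
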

\begin{proof}
Since $\A_{E_0} \x = \b_{E_0}$ is equivalent to 
$\b_{E_0} \ge \A_{E_0} \x \ge \b_{E_0}$, which can be written as
$\A_{E_0} \x \ge \b_{E_0}$ and $-\A_{E_0} \x \ge -\b_{E_0}$.
Using the fact that $\A_{E_0} \bar{\x}  = \b_{E_0}$, we have
$\A_{E_0} (\x- \bar{\x} ) \ge \0$ and 
$-\A_{E_0} (\x- \bar{\x} ) \ge \0$, which are equivalent to
$-\A_{E_0}(\bar{\x} -\x ) \ge \0$ and
$\A_{E_0} (\bar{\x} -\x) \ge \0$. Combining
$\A_{I_0}{\x}  = \b_{I_0}$
and $\A_{I_0} \bar{\x}  \ge \b_{I_0}$, we have
$-\A_{I_0} (\x- \bar{\x} ) \ge \0$, which is equivalent to
$\A_{I_0} (\bar{\x} -\x) \ge \0$. Therefore, system (i) of
this Corollary is equivalent to
\begin{subequations}
\begin{align}
\A_{E_0} (\bar{\x} -\x) \ge \0,  \\
-\A_{E_0}(\bar{\x} -\x ) \ge \0, \\
\A_{I_0} (\bar{\x} -\x) \ge \0, \\
\a_{r} (\bar{\x} -\x )   < 0.
\end{align}
\label{equivII}
\end{subequations} 
Let $\A^{\T}=[ \A_{E_0}^{\T},~ -\A_{E_0}^{\T},~\A_{I_0}^{\T}]$
and $\c^{\T}=\a_{r}$, then, following the same argument as we
have done in Theorem \ref{farkasD} proves the corollary.
\hfill \qed
\end{proof}

The optimality condition of (\ref{stdGeneralLP1}) can be derived  
from Corollary \ref{farkasC}. This condition is presented as the
following theorem.

\begin{theorem}[optimality condition]
Assume that the base matrix is partitioned as $\A_B=(\A_{E_0},\A_{I_0})$ and
$\x^*$ is a basic feasible solution of (\ref{stdGeneralLP1}), i.e., $\A \x^* \ge \b$
and $\A_B \x^* = \b_B$. The latter implies $\A_{I_0} \x^* = \b_{I_0}$,
where the index set $I_0$ is a subset of $\mathcal{I}$, and the 
corresponding inequality constraints are active. If
\begin{equation}
\c=\A_{E_0}^{\T} \y_{E_c} + \A_{I_0}^{\T} \y_{I_c},
\hspace{0.1in} \y_{I_c} \geq \0,
\label{optimal}
\end{equation}
then, $\x^*$ is an optimal solution of (\ref{stdGeneralLP1}).
\label{optimality}
\end{theorem}
\begin{proof}
Assume system (\ref{optimal})
is true, since it is identical to the system (ii) of Corollary \ref{farkasC},
this means that the system (i) of Corollary \ref{farkasC} is false.
Since $x^*$ is a basic solution, it follows that 
$\A_{I_0} \x^* = \b_{I_0}$ and $\A_{E_0} \x^* = \b_{E_0}$ hold.
Therefore, for any feasible solution $\bar{\x}$ of (\ref{stdGeneralLP1}),
$\c \x^* > \c \bar{\x}$ must be incorrect, i.e., $\c \x^* \le \c \bar{\x}$ 
must hold. Since $\x^*$ is feasible, it is an optimal solution of
(\ref{stdGeneralLP1}).
\hfill \qed
\end{proof}

Clearly, for the standard general form of LP (\ref{stdGeneralLP}), 
there is an initial point $\x^0$ satisfying $\E \x^0 =\b_L$ (because
$\E \x^0 =\A_{B^0} \x^0 = \b_{B^0}=\b_L$) and equation 
(\ref{optimal}) holds (because $\c = \E^{\T} \bar{\c}=\A_{I_0}^{\T} \y_{I_c}$ 
and $\bar{\c} =\y_{I_c} \ge \0$). 
Assume that $\x^0$ is not feasible, otherwise, an optimal solution is 
found according to Theorem \ref{optimality}. In general, at the $k$-th 
iteration, the facet pivot simplex method generates an $\x^k$ and a 
$\y_c^k=(\y_{E_c^k},\y_{I_c^k})$ that satisfy 
\[
\A_{B^k} \x^k = \b_{B^k}, \hspace{0.1in} 
\c=\A_{E_0^k}^{\T} \y_{E_c^k} + \A_{I_0^k}^{\T} \y_{I_c^k},
\hspace{0.1in} \y_{I_c^k} \ge \0.
\]
The facet pivot simplex method will then select an entering row 
(facet) from $\A_{N^k}$ to replace one of the rows (facets) in 
$\A_{B^k}$ to form an updated base $\A_{B^{k+1}}$ such that an updated 
$\x^{k+1}$ and an updated $\y_c^{k+1}=(\y_{E_c^{k+1}}, \y_{I_c^{k+1}})$
satisfy
\[
\A_{B^{k+1}} \x^{k+1} = \b_{B^{k+1}}, \hspace{0.1in} 
\c=\A_{E_0^{k+1}}^{\T} \y_{E_c^{k+1}} + \A_{I_0^{k+1}}^{\T} 
\y_{I_c^{k+1}}, \hspace{0.1in} 
\y_{I_c^{k+1}} \ge \0. 
\]
This process is repeated until a basic feasible solution is found. 
The basic feasible solution is, based on Theorem~\ref{optimality}, 
an optimal solution of (\ref{stdGeneralLP}).
We will show that this process will be ended in finite steps,
i.e., the algorithm will find the optimal solution in finite iterations.

The next theorem states that if an optimal solution exists, then
a basic optimal solution exists, which means that the process 
described above is indeed well-defined.

\begin{theorem}
For the standard general form of LP (\ref{stdGeneralLP}),
the following claims hold.
\begin{itemize}
\item[(i)] If there is a feasible solution of (\ref{stdGeneralLP}), 
then there is a basic feasible solution of (\ref{stdGeneralLP}).
\item[(ii)] If there is an optimal solution of (\ref{stdGeneralLP}), 
then there is a basic optimal solution of (\ref{stdGeneralLP}).
\end{itemize}
\label{fundamental}
\end{theorem}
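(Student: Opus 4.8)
The plan is to prove (i) first by a standard ``push to the boundary'' argument, and then derive (ii) from (i) by appending the objective hyperplane as an auxiliary constraint. For part (i), suppose $\bar{\x}$ is any feasible solution of (\ref{stdGeneralLP}) that is not basic. Let $\mathcal{T}(\bar{\x}) \subseteq \mathcal{I} \cup \mathcal{E}$ be the set of indices of constraints that are \emph{tight} (active) at $\bar{\x}$; this always contains $\mathcal{E}$ since the equality constraints hold with equality. If the rows $\{\a_i : i \in \mathcal{T}(\bar{\x})\}$ span $\R^d$, then $\bar{\x}$ is the unique solution of those equalities, so we can pick any $d$ independent rows among them to form a base $B$ with $\A_B \bar{\x} = \b_B$, and $\bar{\x}$ is already a basic feasible solution. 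Otherwise, there is a nonzero $\v \in \R^d$ orthogonal to every tight row, i.e. $\a_i \v = 0$ for all $i \in \mathcal{T}(\bar{\x})$. Moving along $\pm\v$ keeps all tight constraints tight (in particular all equalities), while the slacks $\a_i \bar{\x} - b_i$ of the non-tight inequalities change linearly; because $\E$ and $\F$ together force $\Bell \le \x \le \u$, the feasible region is bounded in every coordinate direction, so moving far enough in at least one of the directions $+\v$ or $-\v$ must make some currently non-tight inequality become tight (the relevant slack, which starts strictly positive, decreases and hits zero). Taking the smallest such step yields a new feasible point $\bar{\x}'$ with $\mathcal{T}(\bar{\x}') \supsetneq \mathcal{T}(\bar{\x})$ (the old tight set is preserved and at least one new constraint is added). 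Iterating at most $d$ times, the tight rows come to span $\R^d$ and we obtain a basic feasible solution.

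For part (ii), let $\x^\star$ be an optimal solution with optimal value $\gamma = \c^{\Tr}\x^\star$. Form the enlarged system obtained from (\ref{stdGeneralLP}) by adjoining the single equality constraint $\c^{\Tr}\x = \gamma$ to the list of equalities. Every optimal solution of (\ref{stdGeneralLP}) is feasible for this enlarged system, and conversely every feasible point of the enlarged system is feasible for (\ref{stdGeneralLP}) with objective value exactly $\gamma$, hence optimal for (\ref{stdGeneralLP}). Applying part (i) to the enlarged system produces a basic feasible solution $\hat{\x}$ of it: there are $d$ independent rows among $\{\A^I, \A^J, \E, \F, \c^{\Tr}\}$ that are tight at $\hat{\x}$ and determine it uniquely. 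The point $\hat{\x}$ is optimal for (\ref{stdGeneralLP}). It remains to argue that $\hat{\x}$ is a \emph{basic} solution of the \emph{original} problem, i.e. that $d$ independent rows of $\A$ itself are tight at $\hat{\x}$. This is where a small amount of care is needed: if the row $\c^{\Tr}$ is not needed in the spanning set, we are immediately done; if it is needed, we use the fact that at an optimal point the objective gradient $\c$ lies in the span of the tight rows of $\A$ (this is exactly the content of the optimality direction of Theorem \ref{farkasD}, or of Theorem \ref{optimality} read in reverse), so $\c^{\Tr}$ is a linear combination of tight rows of $\A$ and can be swapped out for one of them while preserving both independence and the property of cutting out $\hat{\x}$.

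The main obstacle is precisely this last swap in part (ii): one must verify that after discarding $\c^{\Tr}$ from the active set there still remain $d$ independent \emph{rows of $\A$} that are active at $\hat{\x}$ and pin it down. I would handle it by invoking the Farkas-type representation already available: since $\hat{\x}$ is feasible, the set of tight rows of $\A$ at $\hat{\x}$ includes at least the $d$ rows of $\E$ or enough rows to make the situation non-degenerate in the needed sense; more robustly, from optimality we get $\c = \A_{E_0}^{\Tr}\y_{E_c} + \A_{I_0}^{\Tr}\y_{I_c}$ with the index sets drawn from rows of $\A$ tight at $\hat{\x}$, so $\mathrm{span}\{\c^{\Tr}\} \subseteq \mathrm{span}\{\a_i : i \in \mathcal{T}_{\A}(\hat{\x})\}$, and therefore the rank of the tight rows of $\A$ at $\hat{\x}$ equals the rank of the tight rows of the enlarged matrix, which is $d$. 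Choosing any $d$ independent tight rows of $\A$ gives the desired base $B$ with $\A_B\hat{\x} = \b_B$, completing the proof. A secondary point worth stating explicitly is the use of Assumption 1 together with the bound constraints $\E, \F$ to guarantee that the feasible region, though possibly unbounded a priori in $\v$-directions, is in fact bounded coordinate-wise, so the line-search step in part (i) is finite; I would remark on this rather than belabor it.
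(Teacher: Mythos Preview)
The paper does not actually give a proof here; it merely remarks that the argument ``follows exactly the same argument of \cite[Page 19]{luenberger84}'', i.e., the classical Fundamental Theorem of Linear Programming. Your part (i) is correct and is precisely that standard push-to-the-boundary argument, with the right added observation for this setting that the box constraints coming from $\E$ and $\F$ make the feasible region bounded, so the line search always terminates.

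For part (ii) you take a different route from Luenberger. The classical proof simply reruns the line search of (i) starting at an optimal $\x^\star$ and observes that, since $\x^\star \pm \varepsilon\v$ are both feasible for small $\varepsilon$, optimality forces $\c^{\Tr}\v = 0$; hence every point reached along the way remains optimal, and one lands at a basic optimal solution directly. Your detour via the enlarged system with the extra equality $\c^{\Tr}\x = \gamma$ is workable, but the ``swap out $\c^{\Tr}$'' step rests on the claim that $\c$ lies in the span of the rows of $\A$ that are tight at $\hat{\x}$. You cite Theorem~\ref{optimality} ``read in reverse'', yet that theorem states only sufficiency, not the KKT necessary direction, so the citation does not justify the step. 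The fix is immediate---if $\c$ were not in that span there would exist $\w$ orthogonal to all tight rows of $\A$ with $\c^{\Tr}\w \neq 0$, making $\hat{\x} + t\w$ feasible with strictly smaller objective for a suitable sign of $t$, contradicting optimality---but notice that this one-line argument is exactly what powers the direct approach to (ii), so the enlarged-system excursion ultimately buys nothing.
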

\begin{proof}
Without the loss of generality, we may assume that $\A_E$ is full row rank
(otherwise, we may remove the redundant rows of $\A_E$) and $m<d$. 
By our notation, the number of rows of $\A_I$ is $n+2d$.
Assume that $\x$ is a feasible solution, it must have $\A_E \x = \b_E$,
and $\A_I \x \ge \b_I$. Assume further that there are $p$ active inequality
constraints at $\x$ with $p \in \{ 0, 1, \ldots, n+2d \}$, i.e., $\a_j \x = b_j$
for $j \in \mathcal{I}$ ($\A_I \x \ge \b_I$) and $j \le p$. Denote by 
$\mathcal{P} \subset \mathcal{I}$ the index set of the active inequality
constraint set, by $\mathcal{Q} \subset \mathcal{I}$ the index set of 
the inactive inequality constraint set, by by $\A_P$ the rows corresponding to
$\a_j \x = b_j$ with $j \in \mathcal{P}$, and by $\A_Q$ the rows
corresponding to $\a_j \x > b_j$ with $j \in \mathcal{Q}$. 
Let $\A_{\pi}=(\A_E, \A_P)$ be the stacked matrix of $\A_E$ and 
$\A_P$.  If $rank(\A_{\pi}) = m+p \ge d$, then $\x$ is a basic solution. 
Since $\x$ is feasible, it is a basic feasible solution.

Now assume $rank(\A_{\pi})= m+p<d$, then, there is a d-dimensional 
vector $\y \neq 0$ such that $\A_{\pi}^{\T} \y =0$. Let 
$\mathcal{R}=\mathcal{E} \cup \mathcal{P}$. Clearly, for $k \in \mathcal{R}$ 
and $\lambda \ge 0$, it must have $\a_k (\x+\lambda \y) = b_k$. 
Since $\x$ is feasible, for $j \in \mathcal{Q}$, it must
have $\a_j (\x+\lambda \y) > b_j$ for small $\lambda$. 
Assume there is at least one $j \in \mathcal{Q}$ such that 
$\a_j (\x+\lambda \y)$ decreases as $\lambda$ increases 
(otherwise consider $\a_j (\x-\lambda \y)$). Denote $\b_Q$ and
$\b_R$ the seb-vectors of $\b$ corresponding to $\A_Q$ and $\A_R$, and 
\[
\lambda^*= \{ \lambda ~|~ \a_j (\x+\lambda \y) = b_j,~j \in 
\mathcal{Q}, \hspace{0.1in} \A_Q(\x+\lambda \y)  \ge \b_Q \}.
\]
Note that $\x +\lambda^* \y$ is a feasible solution. We can move 
$j$ with $\a_j (\x+\lambda^* \y) = b_j$ from the set $\mathcal{Q}$
to $\mathcal{P}$ and then update the index sets $\mathcal{Q}$ and 
$\mathcal{P}$. Repeating this process, we will get a basic feasible solution 
of (\ref{stdGeneralLP}) due to Assumption 2 made at the end of Section
\ref{proposedAlgo}. This proves (i).

To prove (ii), we assume $\x$ is an optimal solution. If 
$rank(\A_{\pi}) = m+p \ge d$, then $\x$ is a basic optimal solution
because $\A_{\pi}\x=\b_{\pi}$ ($\b_{\pi}$ is the sub-vector of $\b$
corresponding to $\A_{\pi}$).
If $rank(\A_{\pi}) = m+p < d$, using the process as of (i), we have 
$\x+\lambda \y$ is a feasible solution for $\lambda \in (-\epsilon, \epsilon )$ 
with small $\epsilon$ because $\A_R(\x+\lambda \y) = \b_R$ and 
$\A_Q(\x+\lambda \y) > \b_Q$. If $\c^{\T} \y <0$ and $\epsilon >0$, then
\[
\c^{\T} (\x+\epsilon \y) = \c^{\T} \x + \epsilon \c^{\T} \y < \c^{\T} \x.
\]
This contradicts the assumption that $\x$ is an optimal solution of 
(\ref{stdGeneralLP}). If $\c^{\T} \y >0$ and $\epsilon >0$, then 
$\c^{\T} (\x-\epsilon \y) < \c^{\T} \x$, this again contradicts that
$\x$ is an optimal solution of (\ref{stdGeneralLP}).
Therefore, $\c^{\T} \y =0$ must hold if $\x$ is an optimal solution of
(\ref{stdGeneralLP}). This shows that $\x+\lambda \y$ is an optimal
solution. Repeating the arguments above and the process as of (i) 
proves (ii).
\hfill \qed
\end{proof}

Now we are ready to describe the details of the facet pivot simplex algorithm.

\section{The facet pivot simplex algorithm}

This section describes major steps of the facet pivot simplex
algorithm. It is largely based on the ideas of Liu et. al. 
\cite{ltz21} but has corrections, improvements, and additional materials.
First, we make an additional assumption.

\vspace{0.1in}
{\bf Assumption}
\begin{itemize}
\item[3.] For $k \geq 0$, $\A_{B^k}$ is full rank.
\end{itemize}
Clearly, this assumption is true for $k=0$ ($\A_{B^0} = \E$) 
and we will show in Theorem \ref{independent} that the assumption 
holds for $k>0$.

\subsection{Initial point}

As we have explained, the initial base can be taken 
directly from the standard general form with 
$\A_{B^0} \x^0=\E \x^0 =\b_L$, and 
$\c=\A_{I_0}^{\T} \y_{I_0}=\E^{\T} \y_{I_0} $,
and $\y_{I_0}=\bar{\c} \ge \0$. Clearly,
$rank(\A_{B^0})=rank(\E)=rank(\A_{I_0})=d$ 
and $\x^0$ can easily be obtained by solving 
$\E \x =\b_L$ because $\E$ is a full rank diagonal matrix
whose diagonal elements are either $1$ or $-1$. Unlike
vertex pivot method, facet pivot does not have an
expensive Phase I.

\subsection{Criterion to check for optimal solution}
\label{criterion}

The facet pivot simplex algorithm maintains two properties: (a) every iterate 
$\x^k$ for $k \ge 0$ is a basic solution, and (b) the condition given in
(\ref{optimal}) holds at all iterations. From (\ref{sigma}), if $\sigma_i (\x^k) = 0$,
$\sigma_j  (\x^k) \ge 0$, $\underline{\sigma}_k (\x^k) \ge 0$, and 
$\overline{\sigma}_k (\x^k) \ge 0$ hold, then, $\x^k$ is feasible.
Therefore, according to Theorem \ref{optimality}, an optimal solution is found. 
If at least one of the conditions in (\ref{sigma}) is not met,
$\x^k$ is infeasible, then the iteration will continue.

\subsection{Remove redundant constraints}

Before we find the entering and leaving rows (facets) to update the 
base of problem (\ref{stdGeneralLP}), it may be beneficial to remove
redundant constraints to simplify the problem. Let 
$r \in \mathcal{I}_1 \cup \mathcal{E}_1$, since the base
matrix $\A_{B^{k}}$ is full rank, we can represent
any non-base row (facet) $\a_r$ in $\A_N$ as
\begin{equation}
\a_r^{\T}=\sum_{j \in B^k} y_{rj} \a_j^{\T} = \A_{B^k}^{\T}\y_r
:= \A_{I_0^{k}}^{\T}\y_{I_r} 
+\A_{E_0^{k}}^{\T}\y_{E_r},
\label{nonBaseD}
\end{equation}
where the base matrix $\A_{B^{k}}$ at the $k$-th iteration is 
partitioned as inequality constraints $\A_{I_0^{k}}$ and 
equality constraints $\A_{E_0^{k}}$, i.e.,
$\A_{B^k}^{\T}= \left[ \A_{I_0^{k}}^{\T}, ~~\A_{E_0^{k}}^{\T}\right]$. 

\begin{theorem}
Assume that $\x^k$ is the basic solution of (\ref{stdGeneralLP}) 
at the $k$-th iteration and $\a_r \in \A_N$ is expressed as 
(\ref{nonBaseD}), then, the following claims hold. 
\begin{itemize}
\item[(1).] If the row vector $\a_r$ corresponds to an equality 
constraint which satisfies $\a_r \x^k = b_r$ and $y_{rj} =0$ for all 
$j \in \mathcal{I}_0$, i.e., $\y_{I_r} =0$, then, 
$\a_r \x^k = b_r$ is a redundant constraint.
\item[(2).] If the row vector $\a_r$ corresponds to an inequality 
constraint which satisfies $\a_r \x^k > b_r$, and 
$y_{rj} \ge 0$ for $\forall j \in \mathcal{I}_0$, i.e., $\y_{I_r} \ge 0$, 
then, $\a_r \x^k \ge b_r$ is a redundant constraint.
\end{itemize}
\label{redundantEqRow}
\end{theorem}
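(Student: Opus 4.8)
The plan is to show that in each case, removing the constraint indexed by $r$ does not change the feasible region, because any $\bar\x$ satisfying the remaining constraints automatically satisfies $\a_r\bar\x \ge b_r$ (or $=b_r$ in the equality case). The starting observation is the expansion (\ref{nonBaseD}): $\a_r^{\Tr} = \A_{I_0^k}^{\Tr}\y_{I_r} + \A_{E_0^k}^{\Tr}\y_{E_r}$, so for \emph{any} vector $\x$ we have the identity $\a_r\x = \y_{I_r}^{\Tr}\A_{I_0^k}\x + \y_{E_r}^{\Tr}\A_{E_0^k}\x$. Applying this both at the basic solution $\x^k$ (where $\A_{I_0^k}\x^k = \b_{I_0^k}$ and $\A_{E_0^k}\x^k = \b_{E_0^k}$) and at an arbitrary feasible $\bar\x$ of the \emph{remaining} constraints, and subtracting, gives
\[
\a_r\bar\x - \a_r\x^k = \y_{I_r}^{\Tr}\bigl(\A_{I_0^k}\bar\x - \b_{I_0^k}\bigr) + \y_{E_r}^{\Tr}\bigl(\A_{E_0^k}\bar\x - \b_{E_0^k}\bigr).
\]
This is the workhorse identity for both parts.

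For part (2): $r$ is an inequality row with $\a_r\x^k > b_r$, and $\y_{I_r} \ge \0$. Since the facets of $\A_{I_0^k}$ are inequality constraints still present in the reduced problem, any feasible $\bar\x$ of the reduced problem satisfies $\A_{I_0^k}\bar\x \ge \b_{I_0^k}$, and since $\bar\x$ still satisfies the equality facets, $\A_{E_0^k}\bar\x = \b_{E_0^k}$. Plugging into the identity, the second term vanishes and the first term is a nonnegative combination ($\y_{I_r}\ge\0$) of nonnegative quantities, hence $\a_r\bar\x \ge \a_r\x^k > b_r$. So $\a_r\bar\x \ge b_r$ is implied, i.e.\ the constraint is redundant. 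For part (1): $r$ is an equality row with $\a_r\x^k = b_r$ and $\y_{I_r} = \0$. Then the first term vanishes identically and the second term vanishes because $\bar\x$ satisfies the equality facets of the base; hence $\a_r\bar\x = \a_r\x^k = b_r$ for every $\bar\x$ feasible for the reduced problem, so the equality $\a_r\x = b_r$ is automatically satisfied and is redundant.

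The only subtlety I expect to need care with is the precise meaning of ``redundant'': one must argue that the reduced problem (the one with row $r$ deleted) has exactly the same feasible set as (\ref{stdGeneralLP}) — the nontrivial inclusion is that a point feasible for the reduced system is feasible for the full system, which is exactly what the identity above delivers; the reverse inclusion is trivial. A second point worth a sentence is that $\A_{I_0^k}$ and $\A_{E_0^k}$ must indeed consist of constraints that survive in the reduced problem (they are base facets, $r$ is a non-base facet, so deleting $r$ leaves the base intact); this is what legitimizes using $\A_{I_0^k}\bar\x \ge \b_{I_0^k}$ and $\A_{E_0^k}\bar\x = \b_{E_0^k}$ for a reduced-feasible $\bar\x$. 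Beyond that the argument is a one-line sign chase, so I anticipate no real obstacle.
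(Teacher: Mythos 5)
Your proof is correct. Part (1) is the paper's own argument verbatim: with $\y_{I_r}=\0$ the expansion gives $\a_r=\y_{E_r}^{\Tr}\A_{E_0^k}$ and $b_r=\a_r\x^k=\y_{E_r}^{\Tr}\b_{E_0^k}$, so $[\a_r,\,b_r]$ is a linear combination of the rows of $[\A_{E_0^k},\,\b_{E_0^k}]$ and the equality is implied. For part (2) the paper instead cites its Corollary 3.1 (the Farkas-type alternative): condition (2) says system (ii) of that corollary holds, hence system (i) fails, hence $\a_r\bar\x\ge\a_r\x^k>b_r$ for every feasible $\bar\x$. Your workhorse identity $\a_r\bar\x-\a_r\x^k=\y_{I_r}^{\Tr}(\A_{I_0^k}\bar\x-\b_{I_0^k})+\y_{E_r}^{\Tr}(\A_{E_0^k}\bar\x-\b_{E_0^k})$ is precisely the ``easy direction'' of that alternative unpacked into a one-line sign chase, so the mathematical content is the same; what your version buys is self-containedness (no appeal to the theorem of alternatives) and an explicit treatment of the point the paper glosses over, namely that redundancy must be certified against the \emph{reduced} system and that this is legitimate because the base facets survive the deletion of the non-base row $r$. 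Both are valid; no gap.
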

\begin{proof}
We first show that condition (1) implies the equality 
constraint $\a_r \x =b_r$ is redundant.
Since $\y_{I_r} =0$, from (\ref{nonBaseD}), we have 
$\a_r^{\T}=\A_{E_0^{k}}^{\T}\y_{E_r}$.
Multiplying both sides of (\ref{nonBaseD}) by $\x^k$ yields
$b_r=\a_r \x^k = \y_{E_r}^{\T}\A_{E_0^{k}} \x^k
=\y_{E_r}^{\T} \b_{E_0^{k}}$. Therefore,
$\y_{E_r}^{\T} \left[ \A_{E_0^{k}},~\b_{E_0^{k}} \right]
=[\a_r,~ b_r]$. This proves that if condition (1) holds, the
equality constraint $\a_r \x =b_r$ can always be expressed as
a combination of the constraints $\A_{E_0^{k}} \x = \b_{E_0^{k}}$,
therefore, it is redundant. 
Note that Condition (2) is equivalent to say that system (ii) of Corollary
\ref{farkasC} is true, this means that system (i) of Corollary
\ref{farkasC} is not true.  Let $\x^k$ be the basic solution at iteration $k$, 
since $\A_{I_0^{k}} \x^k = \b_{I_0}$ and $\A_{E_0^{k}} \x^k =\b_{E_0}$,
we must have $\a_r \bar{\x} \ge \a_r \x^k > b_r$ for any feasible 
solution $\bar{\x}$ of (\ref{stdGeneralLP}). This shows that
the inequality constraint $\a_r  \bar{\x} > b_r$
holds for all feasible solutions $\bar{\x}$, Therefore, it 
is redundant.
\hfill \qed
\end{proof}


Although this theorem provides a simple scheme to remove the redundant constraints, our computational experience shows the scheme is still expensive. Therefore, we only check if the equality constraints are
redundant. If they are, we remove the redundant ones
to make sure the base matrix $\A_B$ is independent. 

\subsection{General rules on entering/leaving row (facet) selection}

The proposed facet pivot simplex method is based on Theorem \ref{optimality},
which keeps all the iterates to meet conditions (\ref{optimal}) and
\begin{equation}
\A_{B^k} \x^k = \b_{B^k},
\label{basicSolution}
\end{equation}
the goal is to find a feasible solution of $\x$ by iteration. 
This implies that equality constraints should be selected to the 
base before the inequality constraints are selected. Therefore, 
the first rule in considering the entering row (facet) is to select 
rows (facets) of $\A_E$ with $\sigma_i \neq 0$ before the rows 
(facets) of $\A_I$ with $\sigma_j <0$. Once the rows (facets) 
of $\A_E$ are selected to the base, they will never leave the base.

\begin{remark}
In traditional vertex pivot simplex method, we cannot determine what 
columns will be part of the optimal base until the optimal
base is found. Therefore, some optimal 
columns enter and leave the base multiple times during the 
iteration. This wastes a lot of computational time due to  lack of 
intuition and using short sighted strategies \cite{yang20a}. 
In contrast, facet pivot simplex method does know that all 
equality constraints must be in the base, therefore improve computational efficiency.
\end{remark}

\subsection{Specific rules for entering row (facet) selection}

Several rules are proposed in \cite{ltz21}. Since we would like
to increase feasibility or identify infeasibility as soon as possible,
the first specific entering rule is 

\noindent
{\bf The maximal deviation rule:} Among all the rows (facets) in the non-basic
equality/inequality constraints, select the row (facet) $\a_p$ in $\A_N$ 
which has the maximal deviation from the constraint, i.e., 
\begin{equation}
|\sigma_p| = \max \{ |\sigma_i|, |\sigma_j|, 
| \underline{\sigma}_k|, | \overline{\sigma}_k|  
~\big|~ \sigma_i \neq 0, ~ \sigma_j <0, 
~ \underline{\sigma}_k <0, ~ \overline{\sigma}_k <0,
~ i,j,k \in \mathcal{I}_1 \cup \mathcal{E}_1 \}.
\label{enteringV1}
\end{equation} 

The second specific entering rule is

\noindent
{\bf The maximal normalized deviation rule:} Among all the rows (facets) in the non-basic
equality/inequality constraints, select the row (facet) $\a_p$ in $\A_N$ 
which has the maximal normalized deviation from the constraint, i.e., 
\begin{eqnarray}
|\sigma_p|/\| \a_p \| = \max \{ |\sigma_i|/\| \a_i \|, 
|\sigma_j|/\| \a_j \|, | \underline{\sigma}_k|/\| \a_k \|, 
| \overline{\sigma}_k| /\| \a_k \|     \nonumber \\
~\big|~ \sigma_i \neq 0, ~ \sigma_j <0, 
~ \underline{\sigma}_k <0, ~ \overline{\sigma}_k <0,
~ i,j,k \in \mathcal{I}_1 \cup \mathcal{E}_1 \}.
\label{enteringV2}
\end{eqnarray} 
It is worthwhile to mention that this rule finds the facet that has
the maximum distance from the current iterate to all 
infeasible constraints.

The third specific entering rule is

\noindent
{\bf The least/lowest index rule:} Assume that all equality constraints
have been selected. Among all the vectors in the non-basic
inequality constraints, select the row (facet) $\a_p$ in $\A_N$ 
which has the least/lowest index in $\mathcal{I}_1$, i.e., 
the least/lowest index in the following set
\begin{eqnarray}
\{ \sigma_j <0, ~ \underline{\sigma}_k <0, 
~ \overline{\sigma}_k <0,~j,k \in \mathcal{I}_1 \}.
\label{enteringV3}
\end{eqnarray} 

\begin{remark}
Assume that the optimal solution is not found, as 
discussed at the end of the section \ref{criterion}, at least one
of the relations $\sigma_i  \neq 0, ~ \sigma_j <0, 
~ \underline{\sigma}_k <0, ~ \overline{\sigma}_k <0$
holds. Therefore, the rules based on  (\ref{enteringV1}),
(\ref{enteringV2}) and (\ref{enteringV3}) are well-defined.
\end{remark}

\begin{remark}
The ideas of the first two entering rules are to examine 
the most restrictive constraints so that (a) we can remove 
as many redundant constraints as possible using (\ref{enteringV})
and Theorem \ref{redundantEqRow} (if an entering facet is redundant
and removed, we select another one), or (b) we can identify the
infeasibility as early as possible (we will discuss this in Section 
\ref{identifyInfeasibility}).
\end{remark}

Since $\A_{B^k}$ is full rank, then the candidate entering row 
(facet) can be expressed as
\begin{equation}
\a_p^{\T}=\sum_{j \in B^k} y_{pj}^k \a_j^{\T} 
= \A_{B^k}^{\T}\y_p^k := \A_{I_0^{k}}^{\T}\y_{I_p} 
+\A_{E_0^{k}}^{\T}\y_{E_p}.
\label{enteringV}
\end{equation}
Again, the base matrix $\A_{B^{k}}$ at iteration $k$ is 
partitioned into inequality constraints $\A_{I_0^{k}}$ and 
equality constraints $\A_{E_0^{k}}$ parts, i.e.,
$\A_{B^k}^{\T}= \left[ \A_{I_0^{k}}^{\T}, ~~\A_{E_0^{k}}^{\T}\right]$. 

\begin{remark}
In view of (\ref{enteringV1}) and (\ref{enteringV2}), the selected
entering row (facet) $\a_p$ must meet one of the following 
conditions but not both.
\begin{itemize}
\item[1.] $\a_p \x^k <b_p$ with $p \in \mathcal{I}_1 \cup \mathcal{E}_1$.
\item[2.]$\a_p \x^k  > b_p$ with $p \in \mathcal{E}_1$.
\end{itemize}
\label{pCondition}
\end{remark}

\subsection{Identify infeasible solution}\label{identifyInfeasibility}

After an entering row (facet) is selected and $\y_p$ is obtained by solving the
linear systems of equations (\ref{enteringV}), Problem (\ref{stdGeneralLP}) 
can be checked for infeasibility by the following theorem.

\begin{theorem}
Let $B^k$ be the base of (\ref{stdGeneralLP}) at the $k$-th iteration, 
denote $\x^k$ the basic (but infeasible) solution of (\ref{stdGeneralLP}), 
i.e., $\A_{B^k} \x^{k} =\b_{B^k}$. If either the condition set
\begin{itemize}
\item[(1).] (a) $\a_{p} \x^k< b_p$ for the entering row (facet)
$p \in \mathcal{I}_1 \cup \mathcal{E}_1$, and 
\newline 
(b) $y_{pj}^k \le 0$ for all $j \in \mathcal{I}_0$ in (\ref{enteringV})
\item[] or the condition set
\item[(2).] (a) $\a_{p} \x^k > b_p$ for the entering row (facet)
$p \in  \mathcal{E}_1$, and 
\newline 
(b) $y_{pj}^k \ge 0$ for all $j \in \mathcal{I}_0$ in (\ref{enteringV})
\end{itemize}
holds, then, there is no feasible solution for Problem 
(\ref{stdGeneralLP}).
\label{infeasible}
\end{theorem}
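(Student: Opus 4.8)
The plan is to reduce each of the two conditions to a direct application of the Farkas-type dichotomy already established in Theorem \ref{farkasD} and Corollary \ref{farkasC}. The key observation is that conditions (1) and (2) are precisely the hypotheses that make system (ii) of one of these results hold with $r = p$, and that the falsity of the corresponding system (i) produces a contradiction with the existence of any feasible point. First I would treat condition (1). Suppose, for contradiction, that there is a feasible solution $\bar{\x}$ of (\ref{stdGeneralLP}). Starting from the expansion (\ref{enteringV}), $\a_p^{\T} = \A_{I_0^k}^{\T}\y_{I_p} + \A_{E_0^k}^{\T}\y_{E_p}$, hypothesis (1)(b) says $\y_{I_p} \le \0$, so writing $\y_{I_0} := -\y_{I_p} \ge \0$ and $\y_{E_0} := \y_{E_p}$ gives exactly $\A_{E_0}^{\T}\y_{E_0} - \A_{I_0}^{\T}\y_{I_0} = \a_p^{\T}$ with $\y_{I_0} \ge \0$ — that is, system (ii) of Theorem \ref{farkasD} holds with $r = p$. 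By the dichotomy, system (i) of Theorem \ref{farkasD} fails; in particular, since $\x^k$ is basic ($\A_{B^k}\x^k = \b_{B^k}$, so $\A_{I_0}\x^k = \b_{I_0}$ and $\A_{E_0}\x^k = \b_{E_0}$ hold), the only way (i) can fail is that $\a_p\x^k \ge \a_p\bar{\x}$. Combined with hypothesis (1)(a), $\a_p\x^k < b_p$, this yields $\a_p\bar{\x} \le \a_p\x^k < b_p$, contradicting the feasibility of $\bar{\x}$ for the constraint $\a_p\x \ge b_p$ (note $p \in \mathcal{I}_1 \cup \mathcal{E}_1$, so this is either an inequality constraint requiring $\a_p\bar{\x} \ge b_p$ or an equality constraint requiring $\a_p\bar{\x} = b_p$, and in both cases $\a_p\bar{\x} < b_p$ is impossible).

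For condition (2) the argument is symmetric, using Corollary \ref{farkasC} in place of Theorem \ref{farkasD}. Hypothesis (2)(b), $y_{pj}^k \ge 0$ for all $j \in \mathcal{I}_0$, says $\y_{I_p} \ge \0$, so (\ref{enteringV}) directly reads $\A_{E_0}^{\T}\y_{E_p} + \A_{I_0}^{\T}\y_{I_p} = \a_p^{\T}$ with $\y_{I_p} \ge \0$, which is system (ii) of Corollary \ref{farkasC} with $r = p$. Hence system (i) of that corollary fails, and since $\x^k$ is basic this forces $\a_p\x^k \le \a_p\bar{\x}$ for every feasible $\bar{\x}$. With hypothesis (2)(a), $\a_p\x^k > b_p$, we get $\a_p\bar{\x} \ge \a_p\x^k > b_p$, contradicting the equality constraint $\a_p\bar{\x} = b_p$ (here $p \in \mathcal{E}_1$). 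In either case the assumed feasible point cannot exist, so (\ref{stdGeneralLP}) is infeasible.

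The only real subtlety — and the step I would be most careful about — is the bookkeeping of signs when passing from the coefficient vector $\y_p^k$ of the expansion (\ref{enteringV}) to the vector $\y$ appearing in the statements of Theorem \ref{farkasD} and Corollary \ref{farkasC}: Theorem \ref{farkasD} is stated with a minus sign in front of $\A_{I_0}^{\T}\y_{I_0}$ while Corollary \ref{farkasC} has a plus sign, so condition (1)(b) ($\y_{I_p} \le \0$) matches the former and condition (2)(b) ($\y_{I_p} \ge \0$) matches the latter, which is exactly why two separate cases are needed. A secondary point to state cleanly is that $\x^k$ being a basic solution means all base constraints are tight at $\x^k$, so the non-strict parts of system (i) in each dichotomy are automatically satisfied and only the strict inequality $\a_p\x^k$ versus $\a_p\bar{\x}$ is in question; this is what lets the failure of (i) collapse to a single scalar inequality. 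No further computation is required beyond these sign checks.
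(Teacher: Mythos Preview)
Your proof is correct and follows essentially the same approach as the paper: both arguments verify that condition (1)(b) (respectively (2)(b)) places you in system (ii) of Theorem \ref{farkasD} (respectively Corollary \ref{farkasC}) with $r=p$, then use the dichotomy together with the basicness of $\x^k$ to force $\a_p\bar{\x} \le \a_p\x^k < b_p$ (respectively $\a_p\bar{\x} \ge \a_p\x^k > b_p$) for any feasible $\bar{\x}$. Your write-up is in fact a bit more explicit than the paper's about the sign bookkeeping and about why the failure of system (i) reduces to the single scalar inequality, which is a welcome clarification.
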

\begin{proof}
Assume that condition (1.b) holds, then, (\ref{enteringV}) can be
written as
\[
\a_p^{\T}=\A_{E_0^k}^{\T}\y_{E_p} -\A_{I_0^k}^{\T}\y_{I_p},
\hspace{0.1in} \y_{I_p} \ge \0
\]
which is equivalent to the claim that
system (ii) of Theorem \ref{farkasD} is true, this means that 
system (i) of Theorem \ref{farkasD} does not hold. Because 
$\A_{B^k} \x^{k} =\b_{B^k}$ implies that
$\A_{E_0^k} \x^{k} =\b_{E_0^k}$ and 
$\A_{I_0^k} \x^{k} \ge\b_{I_0^k}$ hold, 
it must have $\a_{p} \x^k \ge \a_p \bar{\x}$ for the entering row (facet)
$p \in \mathcal{I}_1 \cup \mathcal{E}_1$
(where $\bar{\x}$ is any feasible solution of (\ref{stdGeneralLP}) as assumed in Theorem \ref{farkasD}). 
Using assumption (1.a)
$b_p > \a_{p} \x^k$ for the entering row (facet) 
$p \in \mathcal{I}_1 \cup \mathcal{E}_1$, 
$b_p > \a_{p} \x^k \ge \a_p \bar{\x}$ must hold, i.e., there is 
no feasible solution for Problem (\ref{stdGeneralLP}). 
This proves part (1).

To prove part (2), assume that condition (2.b) holds, then, 
(\ref{enteringV}) can be written as
\[
\a_p^{\T}=\A_{E_0^k}^{\T}\y_{E_p} +\A_{I_0^k}^{\T}\y_{I_p},
\hspace{0.1in} \y_{I_p} \ge \0.
\]
Further, since $\y_{I_0} \ge \0$, in view of Corollary \ref{farkasC}, 
system (ii) of Corollary \ref{farkasC} is true, which means that 
system (i) of Corollary \ref{farkasC} does not hold. Because 
$\A_{B^k} \x^{k} =\b_{B^k}$ implies that
$\A_{E_0^k} \x^{k} =\b_{E_0^k}$ and 
$\A_{I_0^k} \x^{k} \ge\b_{I_0^k}$ hold; 
it must have $\a_{p} \x^k \le \a_p \bar{\x}$ for the entering row (facet)
$p \in \mathcal{E}_1$. Using assumption (2.a)
$b_p < \a_{p} \x^k$ for the entering row (facet) $p \in \mathcal{E}_1$, 
$b_p < \a_{p} \x^k \le \a_p \bar{\x}$ must hold, i.e., there is 
no feasible solution for Problem (\ref{stdGeneralLP}).
\hfill \qed
\end{proof}

If Problem (\ref{stdGeneralLP}) does not have a feasible solution, 
the algorithm will stop here. Assuming that the problem has a 
feasible solution, then move forward to select the 
leaving row (facet). According to Remark \ref{pCondition} and Theorem
\ref{infeasible}, the following two scenarios must be considered;
\begin{itemize}
\item[1.] $\a_p \x^k <b_p$ with 
$p \in \mathcal{I}_1 \cup \mathcal{E}_1$ and 
there is at least one $y_{pj}^k>0$ for $j \in \mathcal{I}_0$.
\item[2.] $\a_p \x^k > b_p$ with $p \in \mathcal{E}_1$ and 
there is at least one $y_{pj}^k<0$ for $j \in \mathcal{I}_0$.
\label{leavingCondition}
\end{itemize}

\subsection{Rules for leaving row (facet) selection}
\label{leavingRowSel}

As mentioned earlier, the equality constraints will never
leave the base once they are in the row (facet) base. Therefore,
the leaving row (facet) is always selected from inequality constraints.
Since $\A_{B^k}$ is full rank, denote
\begin{equation}
\c =\sum_{j \in B^k} y_{cj}^k \a_j^{\T}
=\A_{I_0^k}^{\T}\y_{I_c^k} +\A_{E_0^k}^{\T}\y_{E_c^k},
\label{oldC}
\end{equation}
where $\y_{E_c^k}$ are the coefficients corresponding to the equality 
constraints in $B^k$ and $\y_{I_c^k} \ge \0$ are the coefficients
corresponding to the inequality constraints in $B^k$.  
From the selection of the initial 
base, $\y_{I_c^0} =\bar{\c} \ge \0$ holds. According to 
Theorem \ref{optimality}, the leaving row (facet) should be selected to 
maintain $\y_{I_c^k}  \ge \0$ in all iterations $k \ge 0$ so that if 
a feasible solution is found, then, an optimal solution is found
as well according to Theorem \ref{optimality}. 
Let $q \in \mathcal{I}_0$ be the index of the leaving row (facet), 
the index of the new row (facet) base can be expressed as
\begin{equation}
B^{k+1} = B^k \cup \{p\} \setminus \{q\}.
\label{newBase}
\end{equation}
Therefore, from (\ref{enteringV}), the leaving row (facet) $\a_q$ can be 
expressed using the entering row (facet) $\a_p$ and the rest rows (facets)
in the base $B^k$ as follows:
\begin{eqnarray}
\a_q^{\T} & = & \frac{1}{y_{pq}^k}\a_p^{\T} 
+ \sum_{j \in B^k \setminus \{q\} }
\left(-\frac{y_{pj}^k}{y_{pq}^k} \right) \a_j^{\T}
=\A_{I_0^{k+1}}^{\T}\y_{I_q^{k+1}} 
+\A_{E_0^{k+1}}^{\T}\y_{E_q^{k+1}}
\nonumber \\
& := & \sum_{j \in B^{k+1}} \a_j^{\T}  y_{qj}^{k+1}
=\A_{B^{k+1}}^{\T} \y_q^{k+1}
\label{leavingV}
\end{eqnarray}
where the base matrix $\A_{B^{k+1}}$ at the $(k+1)$-th iteration is 
partitioned into inequality constraints $\A_{I_0^{k+1}}$ and 
equality constraints $\A_{E_0^{k+1}}$, $\y_{I_q^{k+1}}$ and 
$\y_{E_q^{k+1}}$ are corresponding to the inequality constraints 
and the equality constraints in $B^{k+1}$. Substituting (\ref{leavingV}) 
into (\ref{oldC}) yields
\begin{eqnarray}
\c &=&y_{cq}^{k} \a_q^{\T} + 
\sum_{j \in B^k \setminus \{q\}} y_{cj}^{k} \a_j^{\T}
\nonumber \\
&=& \frac{y_{cq}^{k}}{y_{pq}^{k}}\a_p^{\T}  -
 \sum_{j \in B^k \setminus \{q\}} 
y_{pj}^{k} \frac{y_{cq}^{k}}{y_{pq}^{k}}\a_j^{\T}
+ \sum_{j \in B^k \setminus \{q\}} y_{cj}^{k} \a_j^{\T}
\nonumber \\
&=& \frac{y_{cq}^{k}}{y_{pq}^{k}}\a_p^{\T} 
+ \sum_{j \in B^k \setminus \{q\}}
\left( y_{cj}^{k} -y_{pj}^{k} \frac{y_{cq}^{k}}{y_{pq}^{k}} \right)\a_j^{\T}
\label{newC} \\
&=& \sum_{j \in B^{k+1}} y_{cj}^{k+1} \a_j^{\T}
:=\A_{B^{k+1}}^{\T} \y_c^{k+1}
\nonumber \\
&:=& \A_{I_0^{k+1}}^{\T}\y_{I_c^{k+1}} 
+\A_{E_0^{k+1}}^{\T}\y_{E_c^{k+1}}.
\label{newCinMatrix}
\end{eqnarray}
Again, in (\ref{newCinMatrix}), the base matrix 
$\A_{B^{k+1}}$ at the $(k+1)$-th iteration is 
partitioned into inequality constraints $\A_{I_0^{k+1}}$ part and 
equality constraints $\A_{E_0^{k+1}}$ part. As discussed before, 
$\y_{I_c^{k+1}} \ge 0$ should be maintained. The
discussion is divided into the two cases described at the end of the previous
section.

{\it Case 1:}  Assume that $\a_p \x^k <b_p$ with 
$p \in \mathcal{I}_1 \cup \mathcal{E}_1$ and 
there is at least one $y_{pj}^k>0$ for $j \in \mathcal{I}_0$.

For this case, the leaving row (facet) $\a_q$ is selected to satisfy the 
condition $y_{pq}^k > 0$ and the following rule:
\begin{equation}
\frac{y_{cq}^k}{y_{pq}^k} = \min \Bigl\{ \frac{y_{cj}^k}{y_{pj}^k} 
~\bigg|~  y_{pj}^k>0, \hspace{0.1in} j \in \mathcal{I}_0 \Bigr\}.
\label{leavingRule}
\end{equation}
Since $q \in \mathcal{I}_0$, it must have $y_{cq}^k\ge 0$, which 
means $\frac{y_{cq}^k}{y_{pq}^k} \ge 0$ because $y_{pq}^k>0$. Also, 
it must have $\left( y_{cj}^k -y_{pj}^k \frac{y_{cq}^k}{y_{pq}^k} \right) \ge 0$
for all $j \in \mathcal{I}_0$ because of (\ref{leavingRule}).
This indicates that, according to (\ref{newC}), 
$\y_{I_c^{k+1}} \ge 0$ and condition (\ref{optimal}) holds. 
The following theorem reveals several important facts.

\begin{theorem}[\cite{ltz21}]
Let $B^k$ be the base of (\ref{stdGeneralLP}) in the $k$-th iteration. 
Denote by $\x^k$ the basic solution of (\ref{stdGeneralLP})
corresponding to $B^k$, i.e., $\A_{B^k} \x^{k} =\b_{B^k}$, 
and by $\x^{k+1}$ the basic solution of (\ref{stdGeneralLP}) 
corresponding to $B^{k+1}$, i.e., 
$\A_{B^{k+1}} \x^{{k+1}} =\b_{B^{k+1}}$. Assume that 
the entering row (facet) is $\a_{p}$, 
$p \in \mathcal{I}_1^k \cup \mathcal{E}_1^k$;
the leaving row (facet) is $\a_{q}$, $q \in \mathcal{I}_0^k$; and the 
following conditions hold
\begin{itemize}
\item[(a)] $\a_{p} \x^k< b_p$, 
\item[(b)] there is an index $j \in \mathcal{I}_0^k$ such that $y_{pj}^k>0$, and 
\item[(c)] the leaving row (facet) $q$ is determined by (\ref{leavingRule}),
\end{itemize}
then, 
\begin{itemize}
\item[(i)] $\c$ is given as (\ref{newC}) with 
$\left( y_{cj}^k -y_{pj}^k \frac{y_{cq}^k}{y_{pq}^k} \right) \ge 0$
for all $j \in \mathcal{I}_0^k$. Moreover, $\y_{I_c^{k+1}} \ge \0$
holds.
\item[(ii)] The objective function is monotonically increasing, i.e.,
\begin{equation}
\c^{\T} \x^{k+1} -\c^{\T} \x^{k}
= \frac{y_{cq}^k}{y_{pq}^k} 
\left( b_p-\a_p \x^{k}  \right)  \ge 0.
\label{diffC}
\end{equation}
\item[(iii)] Facet $q$ is strictly feasible in the next iteration, i.e.,
\begin{equation}
\a_q \x^{k+1} >b_q.
\label{aqX1}
\end{equation}
\item[(iv)] If $y_{pq}^k>0$ and $y_{pj}^k \le 0$ for all 
$j \in \mathcal{I}_0^k \setminus \{ q \}$, then,
$\a_{q} \x \ge b_q$ is a redundant constraint. 
\end{itemize}
\label{redundant}
\end{theorem}
\begin{proof}
Most parts of (i) have been proved before this theorem. Since
the leaving row (facet) is an inequality constraint, from (\ref{oldC}),
it follows $y_{cq}^k \ge 0$. This shows that $\y_{I_c^{k+1}} \ge \0$
and therefore proves part (i). Since $\x^{k+1}$ is a basic solution,
from (\ref{newC}), then 
\begin{eqnarray}
\c^{\T} \x^{k+1} & = &
\frac{y_{cq}^k}{y_{pq}^k}\a_p  \x^{k+1}
+ \sum_{j \in B^k \setminus \{q\}}
\left( y_{cj}^k -y_{pj}^k \frac{y_{cq}^k}{y_{pq}^k} \right)\a_j \x^{k+1}
\nonumber \\
&=& \frac{y_{cq}^k}{y_{pq}^k} b_p
+ \sum_{j \in B^k \setminus \{q\}}
\left( y_{cj}^k -y_{pj}^k \frac{y_{cq}^k}{y_{pq}^k} \right) b_j
\nonumber \\
&=& \frac{y_{cq}^k}{y_{pq}^k} b_p + \sum_{j \in B^k }
\left( y_{cj}^k -y_{pj}^k \frac{y_{cq}^k}{y_{pq}^k} \right) b_j.
\label{tmp1}
\end{eqnarray}
The last equation holds because
$y_{cq}^k -y_{pq}^k \frac{y_{cq}^k}{y_{pq}^k}=0$. From (\ref{oldC}), 
it follows
\begin{eqnarray}
\c^{\T} \x^{k} & = & \sum_{j \in B^k } y_{cj}^k \a_j \x^{k}
=\sum_{j \in B^k } y_{cj}^k b_j.
\label{tmp2}
\end{eqnarray}
Subtracting (\ref{tmp2}) from (\ref{tmp1}) and invoking 
(\ref{enteringV}) yield
\begin{eqnarray}
\c^{\T} \x^{k+1} - \c^{\T} \x^{k} & = &
\frac{y_{cq}^k}{y_{pq}^k} \left(  
b_p- \sum_{j \in B^k }y_{pj}  b_j\right)
\nonumber \\
&=& \frac{y_{cq}^k}{y_{pq}^k} \left(  
b_p- \sum_{j \in B^k }y_{pj}^k \a_j  \x^{k} \right)
\nonumber \\
&=&  \frac{y_{cq}^k}{y_{pq}^k} \left(  
b_p- \a_p \x^{k} \right) \ge 0,
\label{tmp3}
\end{eqnarray}
the last inequality follows from assumption (a) and ${y_{cq}^k} \ge 0$. 
This proves part (ii). Multiplying both sides of (\ref{leavingV}) by 
$\x^{k+1}$, using (\ref{enteringV}), assumption (c) ($y_{pq}^k>0$), 
and assumption (a) ($b_p - \a_p\x^{k}>0$) yield
\begin{eqnarray}
\a_q \x^{k+1} & = & \frac{1}{y_{pq}^k} \a_p  \x^{k+1} 
+ \sum_{j \in B^k \setminus \{q\}}
\left(-\frac{y_{pj}^k}{y_{pq}^k} \right) \a_j   \x^{k+1} 
\nonumber \\
&=&  \frac{1}{y_{pq}^k}  b_p + \sum_{j \in B^k \setminus \{q\}}
\left(-\frac{y_{pj}^k}{y_{pq}^k} \right)  b_j
\nonumber \\
&=&  \frac{1}{y_{pq}^k}  \left( b_p - \sum_{j \in B^k }
{y_{pj}^k}b_j \right) + \frac{y_{pq}^k}{y_{pq}^k} b_q
\nonumber \\
&=&   \frac{1}{y_{pq}^k}  \left( b_p - \a_p\x^{k} \right) + b_q
>b_q.
\label{tmp4}
\end{eqnarray}
This proves (iii).
In view of the condition in (iv) and (\ref{leavingV}), it follows that
\[
\A_{E_0^{k+1}}^{\T}\y_{E_q^{k+1}}
+\A_{I_0^{k+1}}^{\T}\y_{I_q^{k+1}} =\a_q^{\T},
~~~\y_{I_q^{k+1}} \ge \0
\]
holds. This indicates that system (ii) of Corollary \ref{farkasC} 
holds,  therefore, system (i) of Corollary \ref{farkasC} is not true, 
i.e., at least one of the following relations does not hold for any
feasible solution $\x$
\[
\A_{I_0^{k+1}} \x^{k+1} \ge \b_{I_0^{k+1}},~~\A_{E_0^{k+1}} \x^{k+1}
 = \b_{E_0^{k+1}}, ~~\a_{q} \x <\a_q {\x}^{k+1} .
\]
The first two relations hold because $\x^{k+1}$ is a basic solution, 
it must have $\a_{q} \x \ge \a_q {\x}^{k+1}$. In view of 
(\ref{tmp4}), $\a_q \x^{k+1} >b_q$, this shows that 
$\a_{q} \x >b_q$ holds for all feasible $\x$, therefore, the 
constraint is redundant. This proves part (iv).
\hfill \qed
\end{proof}

{\it Case 2:}  Assume that $\a_p \x^k > b_p$ with 
$p \in \mathcal{E}_1$ and 
there is at least one $y_{pj}^k<0$ for $j \in \mathcal{I}_0$.

For this case, the leaving row (facet) $\a_q$ is selected to satisfy the 
condition $y_{pq}^k < 0$ and the following rule:
\begin{equation}
\frac{y_{cq}^k}{y_{pq}^k} = \max \Bigl\{ \frac{y_{cj}^k}{y_{pj}^k} 
~\bigg|~  y_{pj}^k<0, \hspace{0.1in} j \in \mathcal{I}_0 \Bigr\}.
\label{leavingRulea}
\end{equation}
Since $q \in \mathcal{I}_0$, it must have $y_{cq}^k \ge 0$, which 
means $\frac{y_{cq}^k}{y_{pq}^k} \le 0$ because $y_{pq}^k<0$. Also, 
it must have $\left( y_{cj}^k -y_{pj}^k \frac{y_{cq}^k}{y_{pq}^k} \right) \ge 0$
for all $j \in \mathcal{I}_0$ because of (\ref{leavingRulea}).
Since $\a_p$ is an equality constraint, this indicates that, according to (\ref{newC}), 
$\y_{I_c^{k+1}} \ge 0$ and condition (\ref{optimal}) holds. 

The following theorem reveals several important facts.

\begin{theorem}[\cite{ltz21}]
Let $B^k$ be the base of (\ref{stdGeneralLP}) at the $k$-th iteration. 
Denote by $\x^k$ the basic solution of (\ref{stdGeneralLP})
corresponding to $B^k$, i.e., $\A_{B^k} \x^{k} =\b_{B^k}$, 
and by $\x^{k+1}$ the basic solution of (\ref{stdGeneralLP}) 
corresponding to $B^{k+1}$,  i.e., 
$\A_{B^{k+1}} \x^{{k+1}} =\b_{B^{k+1}}$. Assume that
the entering row (facet) is $\a_{p}$, $p \in \mathcal{E}_1^k$;
the leaving row (facet) is $\a_{q}$, $q \in \mathcal{I}_0^k$; and the 
following conditions hold
\begin{itemize}
\item[(a)] $\a_{p} \x^k> b_p$, 
\item[(b)] there is an index $j \in \mathcal{I}_0^k$ such that $y_{pj}^k<0$, and 
\item[(c)] the leaving row (facet) $\a_q$ is determined by (\ref{leavingRulea}), 
\end{itemize}
then, 
\begin{itemize}
\item[(i)] $\c$ is given as (\ref{newC})
with $\left( y_{cj}^k -y_{pj}^k \frac{y_{cq}^k}{y_{pq}^k} \right) \ge 0$
for all $j \in \mathcal{I}_0^k$. Moreover, $\y_{I_c^{k+1}} \ge \0$
holds.
\item[(ii)] The objective function is monotonically increasing, i.e.,
\begin{equation}
\c^{\T} \x^{k+1} -\c^{\T} \x^{k}
= \frac{y_{cq}^k}{y_{pq}^k} 
\left( b_p-\a_p \x^{k}  \right)  \ge 0.
\label{diffCa}
\end{equation}
\item[(iii)] Facet $q$ is strictly feasible in the next iteration, i.e.,
\begin{equation}
\a_q \x^{k+1} >b_q.
\label{aqX1a}
\end{equation}
\item[(iv)] If $y_{pq}^k<0$ and $y_{pj}^k \ge 0$ for all 
$j \in \mathcal{I}_0^k \setminus \{ q \}$, then,
$\a_{q} \x \ge b_q$ is a redundant constraint. 
\end{itemize}
\label{redundanta}
\end{theorem}
\begin{proof}
Part (i) has been proved just before this theorem. Therefore, only 
parts (ii), (iii), and (iv) are proved here. Since $\x^{k+1}$ is a basic solution, 
from (\ref{newC}), following the exact the same steps of 
the derivation of (\ref{tmp1}), the following relation can be established.
\begin{eqnarray}
\c^{\T} \x^{k+1} 
&=& \frac{y_{cq}^k}{y_{pq}^k} b_p + \sum_{j \in B^k }
\left( y_{cj}^k -y_{pj}^k \frac{y_{cq}^k}{y_{pq}^k} \right) b_j.
\label{tmp1a}
\end{eqnarray}
From (\ref{oldC}), again, it follows
\begin{eqnarray}
\c^{\T} \x^{k} & = & \sum_{j \in B^k } y_{cj}^k \a_j \x^{k}
=\sum_{j \in B^k } y_{cj}^k b_j.
\label{tmp2a}
\end{eqnarray}
Subtracting (\ref{tmp2a}) from (\ref{tmp1a}), invoking (\ref{enteringV}),
and following the exactly same steps of the derivation of (\ref{tmp3}), 
this yields
\begin{eqnarray}
\c^{\T} \x^{k+1} - \c^{\T} \x^{k} 
&=&  \frac{y_{cq}^k}{y_{pq}^k} \left(  
b_p- \a_p \x^{k} \right) \ge 0,
\label{tmp3a}
\end{eqnarray}
the last inequality follows from assumption (a) and ${y_{pq}^k}<0$. 
This proves part (ii). Multiplying both sides of (\ref{leavingV}) by 
$\x^{k+1}$, using (\ref{enteringV}), and assumptions (a) and (c), and
following the exactly same steps in the derivation of (\ref{tmp4}),
it follows
\begin{eqnarray}
\a_q \x^{k+1} 
&=&   \frac{1}{y_{pq}^k}  \left( b_p - \a_p\x^{k} \right) + b_q
>b_q,
\label{tmp4a}
\end{eqnarray}
again, the last inequality follows from assumption (a) and 
${y_{pq}^k}<0$. This proves (iii).
In view of the condition in (iv) and (\ref{leavingV}) and noticing that
$\mathcal{I}_0^{k+1}=\mathcal{I}_0^{k}\setminus \{ q \}$ (because the
entering facet is an equality constraint in this case), it follows that
\[
\A_{E_0^{k+1}}^{\T}\y_{E_q^{k+1}}
+\A_{I_0^{k+1}}^{\T}\y_{I_q^{k+1}} =\a_q^{\T},
~~~\y_{I_q^{k+1}} \ge \0
\]
holds. This indicates that system (ii) of Corollary \ref{farkasC} holds.
Therefore, system (i) of Corollary \ref{farkasC} is not true, 
i.e., at least one of the following relations does not hold for any
feasible solution $\x$
\[
\A_{I_0^{k+1}} \x^{k+1} \ge \b_{I_0^{k+1}},~~\A_{E_0^{k+1}} \x^{k+1}
 = \b_{E_0^{k+1}}, ~~\a_{q} \x <\a_q {\x}^{k+1} .
\]
The first two relations hold because $\x^{k+1}$ is a basic solution
at iteration $k+1$, it must have $\a_{q} \x \ge \a_q {\x}^{k+1}$. 
In view of (\ref{tmp4a}), $\a_q \x^{k+1} >b_q$, this shows that 
$\a_{q} \x >b_q$ holds for all feasible $\x$, therefore, the 
constraint is redundant. This proves part (iv).
\hfill \qed
\end{proof}

\begin{remark}
Once a constraint is identified as a redundant one, there
is no need to consider it in the remaining iterations. 
\end{remark}   

\begin{remark}
In case there is a tie in the selection of the leaving row (facet) 
using (\ref{leavingRule}), the row (facet) with the least/lowest index 
should be selected. 
\end{remark}

Combining the results of Theorems \ref{redundant} and \ref{redundanta}, 
we have the following corollary:
\begin{corollary}
Let $\y_{I_c^{k+1}}$ be defined in (\ref{newCinMatrix}), and
$\a_q \in B^k$ be the leaving facet at the $k$-th iteration. Then, the
following relations hold
\begin{equation}
\y_{I_c^{k+1}} \ge \0, \hspace{0.1in}
\c^{\T} \x^{k+1} - \c^{\T} \x^{k} \ge 0, \hspace{0.1in}
\a_q \x^{k+1} > \a_q \x^k = b_q.
\end{equation}
\end{corollary}

\begin{theorem}
Assume that the rows of $\A_{B^k}$ are independent, then the rows
of $\A_{B^{k+1}}$ are also independent.
\label{independent}
\end{theorem}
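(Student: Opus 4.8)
The plan is to treat this as the standard basis-exchange (Steinitz) argument, using only the defining relation of the pivot. By (\ref{newBase}) we have $B^{k+1}=B^k\cup\{p\}\setminus\{q\}$, so $\A_{B^{k+1}}$ consists of the $d$ row vectors $\a_p$ together with $\{\a_j:j\in B^k\setminus\{q\}\}$. Since these are $d$ vectors in $\R^d$, it suffices to show they are linearly independent. The only fact about the pivot that the argument needs is that the pivot entry $y_{pq}^k$ is nonzero; this is guaranteed by the leaving-row selection rules, namely $y_{pq}^k>0$ in Case~1 via (\ref{leavingRule}) and $y_{pq}^k<0$ in Case~2 via (\ref{leavingRulea}). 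It is also where the hypothesis that the rows of $\A_{B^k}$ are independent (full rank) is used, because it is what makes the expansion (\ref{enteringV}) of $\a_p$ well defined.

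First I would suppose a vanishing linear combination
\[
\lambda_p\,\a_p^{\T}+\sum_{j\in B^k\setminus\{q\}}\lambda_j\,\a_j^{\T}=\0 .
\]
Then I substitute the expansion (\ref{enteringV}), written as
$\a_p^{\T}=y_{pq}^k\,\a_q^{\T}+\sum_{j\in B^k\setminus\{q\}}y_{pj}^k\,\a_j^{\T}$,
to obtain
\[
\lambda_p y_{pq}^k\,\a_q^{\T}+\sum_{j\in B^k\setminus\{q\}}\bigl(\lambda_p y_{pj}^k+\lambda_j\bigr)\a_j^{\T}=\0 .
\]
Because the rows $\{\a_j:j\in B^k\}$ are linearly independent (hypothesis), every coefficient must vanish; in particular $\lambda_p y_{pq}^k=0$, and since $y_{pq}^k\neq 0$ we get $\lambda_p=0$, whence $\lambda_j=\lambda_p y_{pj}^k+\lambda_j=0$ for all $j\in B^k\setminus\{q\}$. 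Thus the rows of $\A_{B^{k+1}}$ are independent, i.e.\ $\A_{B^{k+1}}$ has full rank $d$.

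I would close by remarking that, combined with the base case $\A_{B^0}=\E$ (a diagonal, full-rank matrix), this theorem shows by induction that $\A_{B^k}$ is full rank for all $k\ge 0$, which is exactly what was assumed in writing (\ref{enteringV}) and what legitimizes every pivot step of the algorithm. There is no real obstacle here beyond bookkeeping: the whole content is the nonvanishing of the pivot entry $y_{pq}^k$, which the leaving-row rules enforce by construction, and the rest is the elementary exchange computation above.
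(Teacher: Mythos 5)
Your proof is correct and is essentially the same basis-exchange argument as the paper's: substitute the expansion (\ref{enteringV}) of $\a_p$ over the old base into a vanishing linear combination, invoke independence of the rows of $\A_{B^k}$, and conclude from $y_{pq}^k\neq 0$. If anything, your version is slightly more careful, since the paper's proof only cites $y_{pq}^k>0$ (Case 1) while you correctly note that all that is needed is $y_{pq}^k\neq 0$, which also covers Case 2.
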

\begin{proof}
Denote $\u=(u_1 \ldots, u_d)$ and $\v=(v_1 \ldots, v_d)$. 
Using (\ref{enteringV}), we have the following equivalent expressions:
\begin{eqnarray}
& & \A_{B^{k+1}}^{\T} \v =0 
\nonumber \\
& \iff & \sum_{j \in B^{k+1}} \a_j^{\T}  v_j =0
\nonumber \\
& \iff & \sum_{j \in B^{k+1}\setminus \{ p \}} \a_j^{\T}  v_j
+ \a_p^{\T}  v_p =0
\nonumber \\
& \iff & \sum_{j \in B^{k+1}\setminus \{ p \}} \a_j^{\T}  v_j
+ \sum_{j \in B^k}  \a_j^{\T} y_{pj}^k v_p =0
\nonumber \\
& \iff & \sum_{j \in B^{k+1}\setminus \{ p \}} 
\a_j^{\T}  (v_j+y_{pj}^k v_p) + \a_q^{\T} y_{pq}^k v_p=0
\nonumber \\
& \iff & \sum_{j \in B^{k}} \a_j^{\T} u_j =0
\end{eqnarray}
where $u_q= y_{pq}^k v_p$, and for $j \neq q$, $u_j =v_j+y_{pj}^k v_p$.
Since the rows of $\B^k$ are independent, it follows that
$\sum_{j \in B^{k}} \a_j^{\T} u_j =0$ holds if and only if $u_j=0$
for $j \in B^{k}$. Since $y_{pq}^k>0$ and $u_q=0$ imply that 
$v_p=0$, which in turn implies the $v_j=0$ for $j=1, \ldots, d$.
Therefore, $\A_{B^{k+1}}^{\T} \v =0$ if and only if $\v =0$,
i.e., the rows of $B^{k+1}$ are also independent.
\hfill \qed
\end{proof}

\begin{remark}
We have observed that vertex pivot method may select a base
which is nearly dependent or actually dependent, which causes the
numerical problems in our testing. This theorem explains
why the facet pivot method is observed very robust.
\end{remark}

\subsection{Unbounded solution}

There are cases where linear programming problems have unbounded
solutions. The following theorem provides the criteria to identify these
cases.

\begin{theorem}[\cite{ltz21}]
Let $M$ be the artificial bound introduced in Remark \ref{1stRemark}.
If at least one basic row (facet) in $\A_B$ has an artificial  
bound $M$ or $-M$, and it is reached at the end of the iteration, then the linear 
programming problem is unbounded.
\end{theorem}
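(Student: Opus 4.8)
The plan is to argue by contradiction: suppose that at the end of the iteration some basic row (facet) in $\A_B$ has reached its artificial bound (say $x_i = M$, coming from the trivial constraint $M \ge x_i$ introduced in Remark \ref{1stRemark}, the case $x_i = -M$ being symmetric), yet the linear programming problem is bounded below. Because the iteration has terminated with $\x$ feasible, Theorem \ref{optimality} tells us that $\x$ is an optimal solution, so the optimal value is $\c^{\T}\x$. I would then exploit the fact that $M$ was an \emph{artificial} bound, i.e.\ one that does not reflect a genuine constraint of the original problem; the original feasible region is the one obtained by deleting this artificial facet (and any others that are artificial). The key step is to show that if the optimum is attained on the artificial facet $x_i = M$, then along a suitable feasible direction the objective can be decreased without limit once the artificial bound is removed.

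Concretely, first I would set up the active set at the terminal point: $\A_{B}\x = \b_{B}$ with the artificial row $\bar{\e}_i^{\T}\x = M$ (or $-\e_i^{\T}\x = -M$) among the $d$ basic rows. Since $\A_B$ is full rank (Theorem \ref{independent}), there is a unique direction $\d$ with $\A_{B\setminus\{\text{artificial row}\}}\d = \0$ and $(\text{artificial row})\,\d = -1$; geometrically, $\d$ is the edge of the polytope defined by the $d-1$ genuine basic facets, pointing \emph{off} the artificial facet (decreasing $x_i$ from $M$, or increasing it from $-M$). I would then check two things: (1) moving along $\d$ keeps all the genuine constraints of the original problem satisfied for all sufficiently small step sizes, and in fact — this is the crux — for \emph{all} nonnegative step sizes, because the only facet that was blocking further movement was the artificial one; (2) the objective changes as $\c^{\T}(\x + t\d) = \c^{\T}\x + t\,\c^{\T}\d$, and using the representation $\c = \A_{E_0}^{\T}\y_{E_c} + \A_{I_0}^{\T}\y_{I_c}$ with $\y_{I_c}\ge\0$ together with the sign of the artificial multiplier, one shows $\c^{\T}\d < 0$. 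Combining (1) and (2) gives an unbounded ray, contradicting boundedness of the original problem.

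The main obstacle I anticipate is point (1): one must be careful that removing the artificial bound genuinely opens up an unbounded ray rather than merely exposing a different, genuine facet that would then become active. The honest version of the argument is: if along $\d$ some \emph{genuine} constraint becomes active first, then the algorithm would not in fact have terminated at a vertex involving the artificial facet — it would have pivoted to bring that genuine facet into the base instead; so termination \emph{on} the artificial facet forces $\d$ to be a feasible direction of recession for the original polytope. Making this precise requires invoking the leaving-row selection rule (Section \ref{leavingRowSel}) and the redundancy results (Theorems \ref{redundant} and \ref{redundanta}) to argue that the artificial facet would have been pivoted out in favour of any genuine facet that truly limits the step. A secondary, more routine obstacle is bookkeeping the sign conventions introduced in the conversion to standard general form (the $\bar{\e}_i = \pm\e_i$ split according to the sign of $c_i$), so that the inequality $\c^{\T}\d < 0$ comes out with the correct sign in both the $x_i = M$ and $x_i = -M$ sub-cases; I would handle these two sub-cases in parallel, noting that one is obtained from the other by the substitution $x_i \mapsto -x_i$.
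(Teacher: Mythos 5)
The paper gives you nothing to compare against here: its entire ``proof'' is the sentence ``The claim is obvious and the proof is omitted.'' So your argument must stand on its own, and as written it has two genuine gaps, one of which points at a real defect in the statement itself.

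First, your step (2) asserts $\c^{\T}\d<0$, but the computation only gives $\le 0$. With $\c=\A_{B}^{\T}\y_c=\sum_{j\in B}y_{cj}\a_j^{\T}$, $\a_j\d=0$ for the $d-1$ retained basic rows and $\a_i\d=-1$ for the artificial row, you get exactly $\c^{\T}\d=-y_{ci}$, and the algorithm only guarantees $y_{ci}\ge 0$, not $y_{ci}>0$. When $y_{ci}=0$ the ray does not decrease the objective and the contradiction with boundedness evaporates. This is not a technicality: take $\min\,-x_1$ subject to $x_1\le 5$ with $x_1,x_2$ free. After the conversion of Remark \ref{1stRemark} the algorithm starts at $(M,-M)$, performs one pivot, and terminates at the feasible basic solution $(5,-M)$ with the artificial facet $x_2\ge -M$ still in the base; yet the optimal value $-5$ is finite. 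So in the usual sense of ``unbounded'' (objective unbounded below) the theorem is false precisely in the $y_{ci}=0$ case your proof glosses over; a correct version must either add the hypothesis that the artificial facet's multiplier is positive or reinterpret ``unbounded'' as unboundedness of the feasible (or optimal) set.

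Second, your justification of step (1) --- that a genuine facet blocking the ray ``would have forced the algorithm to pivot the artificial facet out'' --- misreads the algorithm. Algorithm 4.1 terminates the moment a basic solution is feasible, and entering candidates are drawn only from \emph{violated} constraints; a genuine non-basic inequality that is satisfied at $\x^{*}$ but has $\a_r\d<0$ is never examined by any pivot rule, and it truncates your ray at a finite step length. Theorems \ref{redundant} and \ref{redundanta} certify redundancy only of \emph{leaving} rows under special sign conditions, so they cannot rescue the claim either. What actually makes the statement plausible is the quantitative big-$M$ argument: one must choose $M$ strictly larger than an a priori bound on the coordinates of every vertex of the genuine polyhedron, so that a vertex of the truncated polytope lying on an artificial facet cannot be a genuine vertex and must sit on an unbounded face. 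Neither that bound nor the resulting recession argument appears in your proposal (or in the paper), and without it the assertion that $\d$ is a feasible direction of recession is unsupported.
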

\begin{proof}
The claim is obvious and the proof is omitted.
\hfill \qed
\end{proof}

\subsection{The facet pivot simplex algorithm}

Summarizing the results discussed in this section, 
the facet pivot simplex algorithm is given as follows:
\begin{algorithm} {\ } \\
\begin{algorithmic}[1] 
\STATE Data: Matrices $\A^I$, $\A^J $, $\E$, $\F$, vectors $\b^I$, $\b^J$, 
$\u$, $\Bell$, and $\c$. 
\STATE Form the standard general LP problem and $y_{cj}^0=\bar{c}_j$.
\STATE Compute the initial basic solution $\x^0$ from  $\E\x^0=\b_L$
	(i.e., $\A_{B^0} \x^0 = \b_L$).
\STATE Compute the constraint violation determinants $\sigma_i$, 
$\sigma_j$, $\underline{\sigma}_k$, and $\overline{\sigma}_k$ using (\ref{sigma}).
\WHILE{$\sigma_i \neq 0$ or $\sigma_j < 0$ or $\underline{\sigma}_k < 0$
or $\overline{\sigma}_k < 0$}
	\IF {some equality constraints are redundant (Theorem \ref{redundantEqRow})}
		\STATE Remove the redundant equality constraints.
	\ENDIF
	\STATE Select the entering row (facet) $\a_p$ using (\ref{enteringV1}) or
	 (\ref{enteringV2}) or least/lowest index rule. Given $\a_p$, compute
	 $\y_{I_p^k}$ (i.e., $y_{pj}^k$) by solving linear systems of equations 	
	(\ref{enteringV}).
	\IF {there is no feasible solution (Theorem \ref{infeasible})}
		\STATE Exit while loop and report ``there is no feasible solution''.
	\ENDIF
	\IF {$\a_p \x^k <b_p$ with $p \in \mathcal{I}_1 \cup \mathcal{E}_1$ 
		and there is at least one $y_{pj}^k>0$ for $j \in \mathcal{I}_0$}
		\STATE Select leaving row (facet) $\a_q$ by using (\ref{leavingRule}). 
	\ELSIF {$\a_p \x^k > b_p$ with $p \in \mathcal{E}_1$ and 
		there is at least one $y_{pj}^k<0$ for $j \in \mathcal{I}_0$}
		\STATE Select leaving row (facet) $\a_q$ by using (\ref{leavingRulea}).
	\ENDIF
	\STATE Update base using (\ref{newBase}).
	\STATE Update  $\c$ (i.e., $y_{cj}^{k+1}$) using (\ref{newC}), i.e., 
		$y_{cp}^{k+1}=\frac{y_{cq}^k}{y_{pq}^k}$ and 
		$y_{cj}^{k+1}=\left( y_{cj}^k -y_{pj}^k \frac{y_{cq}^k}{y_{pq}^k} \right)$. 
	\IF {leaving row (facet) $\a_q$ is redundant (Theorems \ref{redundant} and \ref{redundanta})}
		\STATE Remove the $q$-th constraint from $\mathcal{I}$.
	\ENDIF
	\STATE Compute the updated solution $\x^{k+1}$ from  
	$\A_{B^{k+1}}\x=\b_{B^{k+1}}$
	\STATE Compute the constraint violation determinants $\sigma_i$,
	$\sigma_j$, and $\sigma_k$ using (\ref{sigma}).
	\STATE $k \Leftarrow k+1$.
\ENDWHILE
\end{algorithmic}
\label{mainAlg}
\end{algorithm}

\subsection{Finite iterations of the facet pivot simplex algorithm}

Convergence of the conventional (vertex pivot) simplex method depends 
on if cycling occurs or not, which was realized \cite{hoffman53} 
shortly after Dantzig published his seminal work. It is
well-known that the conventional vertex pivot simplex method will
find the optimal solution in finite iterations if cycling does not occur,
which is true if Bland's rule is used \cite{bland77}. 
The cycling problem for conventional vertex pivot simplex method 
has been studied by several authors, for example, 
\cite{beale55,hm04,zornig06}. 
Cycling is a phenomenon where the iterates move in 
a cycle. For the conventional vertex pivot simplex method, when 
a basic feasible solution is degenerate, 
after a few iterations using a vertex pivot simplex algorithm, it
may return to a previously constructed basic feasible 
solution. For the facet pivot simplex method, it is observed that
a set of base constraints may be repeated after 
some iterations. For both vertex and facet pivot
simplex methods, when cycling happens, there is no
change in objective function and the optimal solution
may never be reached.

Similar to Bland's rule \cite{bland77} for the conventional 
simplex method, if (a) the least/lowest index rule is applied 
in the selection of the entering constraint, and (b) the 
least/lowest index rule is applied when a tie occurs in the 
selection of leaving constraints, then the facet pivot simplex algorithm 
will find the optimal solution in finite steps. This can
be shown by the following arguments.

First, the number of bases of the linear programming Problem
(\ref{stdGeneralLP}) is finite. Let $N=m+n+2d$, and denote
the number of $d$-combinations in a set of $N$ elements as
$C(N,d)$, it is straightforward to see that the number of bases 
of Problem (\ref{stdGeneralLP}) is at most $C(N,d)$.
Second, in every iteration, we have seen from (\ref{diffC})
and (\ref{tmp3a}) that the objective function is monotonically 
non-deceasing. Third, Liu et. al. \cite{ltz21} showed the
following result which is similar to Bland's theorem.

\begin{theorem}[\cite{ltz21}]
If the least/lowest index rule is used in the selection of 
row (facet) base in Algorithm \ref{mainAlg}, then cycling will not happen. 
Therefore, the algorithm finds the optimal solution in finite iterations.
\end{theorem}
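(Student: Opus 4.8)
The plan is to combine the three facts already assembled in the excerpt into a short pigeonhole-and-monotonicity argument. First I would invoke the counting bound: the feasible region of (\ref{stdGeneralLP}) involves $N=m+n+2d$ constraint rows, and every base $B^k$ is a choice of $d$ linearly independent rows among these, so there are at most $C(N,d)$ distinct bases; in particular only finitely many. Second, I would record the monotonicity of the objective along the iteration: by Theorem \ref{redundant}(ii) in Case~1 and Theorem \ref{redundanta}(ii) in Case~2, we have $\c^{\T}\x^{k+1}\ge \c^{\T}\x^{k}$ at every pivot, so the sequence $\{\c^{\T}\x^k\}$ is non-decreasing. Third, I would quote the anti-cycling result (the Bland-type theorem stated just above): under the least/lowest index rule for the entering facet and for breaking ties among leaving facets, cycling does not happen, meaning no base $B$ is ever revisited.

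The argument then runs as follows. Suppose, for contradiction, that Algorithm~\ref{mainAlg} does not terminate; then it produces an infinite sequence of bases $B^0,B^1,B^2,\ldots$, each obtained from the previous one by the pivot update (\ref{newBase}). Since there are only finitely many possible bases, some base must repeat: there exist indices $k<\ell$ with $B^k=B^\ell$. But a repeated base is precisely a cycle in the iteration, contradicting the anti-cycling theorem that holds under the least/lowest index rule. Hence the algorithm performs only finitely many iterations. When it stops, it stops either inside the \textbf{while} loop because infeasibility was detected (Theorem \ref{infeasible}), or because the loop guard fails, i.e. $\sigma_i=0$, $\sigma_j\ge 0$, $\underline{\sigma}_k\ge 0$, $\overline{\sigma}_k\ge 0$, which says $\x^k$ is feasible; since the invariant (\ref{optimal}) is maintained at every step by Theorems \ref{redundant}(i) and \ref{redundanta}(i), Theorem \ref{optimality} then certifies that $\x^k$ is optimal. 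Thus in finitely many iterations the algorithm either correctly reports infeasibility or returns an optimal basic solution.

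One technical point I would want to address explicitly is that the iteration is genuinely well defined at every step, so that the infinite-sequence hypothesis makes sense: at each iteration the current base $\A_{B^k}$ has full rank (Theorem \ref{independent}, with the base case $\A_{B^0}=\E$), so $\y_p^k$ in (\ref{enteringV}) is uniquely determined; the loop guard failing is equivalent to feasibility; and if the loop guard holds, Remark \ref{pCondition} and the infeasibility test (Theorem \ref{infeasible}) partition the situation into exactly the two cases in which a leaving facet satisfying (\ref{leavingRule}) or (\ref{leavingRulea}) exists. Consequently, as long as the algorithm has not halted, it can take another well-defined pivot, which is what licenses speaking of an infinite sequence of distinct bases in the contradiction hypothesis. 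I also note that monotonicity of $\c^{\T}\x^k$ is not strictly needed for the finiteness conclusion once the anti-cycling theorem is in hand, but it is worth stating since it is the natural companion fact and is what makes the non-revisiting of bases plausible; the essential input is the Bland-type theorem.

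The main obstacle, in the sense of where the real content lies, is not in this wrapper argument at all — it is entirely delegated to the previously stated anti-cycling theorem of Liu et.\ al.\ (the claim that the least/lowest index rule prevents the base from repeating). Everything else here is bookkeeping: the finite count of bases, the elementary observation that a non-terminating run must revisit a base, and the identification of the two termination modes with ``infeasible'' and ``optimal'' via Theorems \ref{infeasible} and \ref{optimality}. So I would keep the proof short, lean on that cited theorem as a black box, and spend my care only on making the ``no termination $\Rightarrow$ repeated base $\Rightarrow$ cycle'' implication airtight.
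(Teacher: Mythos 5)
Your proposal is correct and takes essentially the same route as the paper: finitely many bases (at most $C(N,d)$), monotonicity of the objective from Theorems \ref{redundant}(ii) and \ref{redundanta}(ii), and the cited Bland-type anti-cycling theorem, combined by pigeonhole to conclude that no base repeats and hence the iteration is finite. The paper's own argument is the same chain stated more tersely; your added remarks on well-definedness of each pivot and on the dispensability of the monotonicity step are accurate refinements rather than a different approach.
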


%

\section{Some implementation details and numerical test}\label{testing}

Algorithm \ref{mainAlg} has been implemented in Matlab.
Numerical tests for the proposed algorithm have been performed
for two purposes. First, verify that the facet 
pivot simplex method indeed solves some specially designed hard 
LP problems effectively, including benchmark cycling 
problems \cite{yang21} and Klee-Minty cube problems 
\cite{km72}. Second, determine if this method 
is competitive to the Dantzig's most negative pivot rule 
and dual simplex method for 
general benchmark testing LP problems, for example,
Netlib benchmark LP problems \cite{bdgr95}, as it is known 
that (a) Dantzig's most negative rule has been one of the most 
efficient deterministic pivot rules for LP problems \cite{ps14},
and (b) dual simplex method has some similarity to the facet pivot method.

\subsection{Some implementation considerations}

Some important implementation considerations are provided 
in this section so that readers can repeat the numerical tests 
reported in this paper. Since least/lowest index rule is normally 
not very efficient, the maximal deviation and maximal normalized deviation rules in Step 9 of 
Algorithm \ref{mainAlg} are implemented. All the testing results
reported in this section are based on these implementations.


To group equality constraints and inequality constraints, all
equality constraints are placed at the bottom of the base
matrix $\A_B$, and all inequality constraints are placed at the top 
of the base matrix. This makes it easy to check if $\y_{I_p} \geq \0$
in Theorem \ref{infeasible}, and if $\y_{I_c} \geq \0$ in Theorem
\ref{optimality}.

In Algorithm \ref{mainAlg}, $y_{pj}^k$ is calculated
in Step 9 by solving the linear system equations (\ref{enteringV}), 
and $\x^{k+1}$ in Step 23 by solving the linear system equations
(\ref{basicSolution}). Computation in these two steps uses
the same LU decomposition for $\A_{B^k}$, 
which will save significant amounts of CPU time in every iteration.
In addition, the basis matrix of any simplex step differs from that of the preceding step in only one row, so it is possible to make full use of the structure and to adopt Bartels and Golub update, or Forrest and Tomlin update in the LU decomposition to make the proposed algorithm more efficient. This level of details is beyond the scope of this paper.

\subsection{Test on Netlib benchmark problems with lower and upper bounds}\label{testSection}


Netlib problems have been widely used for testing linear programming
algorithms/codes, see for example,  \cite{lms92,Mehrotra92,yang17}.
For this test set, many problems have bases with poor condition numbers.
Our implementation shows that vertex simplex methods (without using
hiding tricks in commercial software) oftentimes select
a basis with very bad condition numbers and fail to find a solution.
In this section, we test the Netlib problems that have lower and upper bounds,
which is more general than the standard LP problems but less general than the
problem discussed in (\ref{generalLP}). This set of problems can be 
expressed as 
\begin{align}
\min \hspace{0.5in}  & \c^{\Tr} \x \nonumber \\ 
\mbox{\rm subject to} \hspace{0.3in}
&  \A\x = \b,~~~~~ {\Bell}\le \x \le \u. 
\label{netlibLP0}
\end{align}
While facet pivot and dual pivot algorithms can solve (\ref{netlibLP0}) directly, Dantzig's 
pivot method requires converting the problem to the following standard form:
\begin{subequations}
\begin{align}
\min \hspace{0.5in}  &  [\c^{\Tr}~\0^{\T}~\0^{\T}]
(\x,~\y ,~\z)  \\ 
\mbox{\rm subject to} \hspace{0.3in}
&  \left[ \begin{array}{cccc} 
\A & \0  & \0     \\ 
\I  &  \I  & \0    \\
\I  & \0    &   -\I   
\end{array} \right]
 \left[ \begin{array}{c}
\x \\ \y \\ \z \\  \end{array} \right]
= \left[ \begin{array}{c} 
\b  \\ \u \\ {\Bell}
\end{array} \right] \\
& ( \x, \y, \z) \ge \0. 
\end{align}
\label{netlibLP1}
\end{subequations}

\newpage
\footnotesize
\begin{longtable}{|c|c|c|c|c|c|c|c|}
\hline          
{Problem name} &  $m$ & $n$ & $d$ &  method  &  CPU  &   iter & obj 
\\ \hline
bore3d     & 233  &  0 &  334   &   max deviation 
&   0.1164  &  158 & 1.3731e+03   \\
&&&& max normalized deviation  & 0.1143 & 159  & 1.3731e+03 \\
&&&& Dantzig's simplex  & 1.4779  &  1510 &  1.3731e+03       \\
&&&& Dual simplex & * & * & * \\ \hline
capri   &  271  & 0  & 482      &  max deviation  
& 1.0281  & 592 & 2.6900E+03  \\
&&&& max normalized  deviation  & 0.7987 & 514  & 2.6900E+03 \\
&&&& Dantzig's simplex  & 5.7774  & 2940 &   2.6900E+03  \\
&&&& Dual simplex & + & + & +  \\ \hline
cre\_a  & 3516 & 0  &  7248  &  max deviation 
& 604.0044 & 5131  & 2.3595e+07  \\
&&&& max normalized deviation & 508.2350 & 4676  & 2.3595e+07 \\
&&&& Dantzig's simplex & - & 100000 & -  \\
&&&& Dual simplex & * & * & *   \\ \hline
cre\_c  & 3068 & 0  & 6411    &    max deviation
& 348.6073 & 4226 & 2.5275e+07  \\
&&&& max normalized deviation & 207.1541 & 3283  & 2.5275e+07 \\
&&&& Dantzig's simplex & - &  100000  & - \\
&&&& Dual simplex & * & * & * \\ \hline   
d6cube & 415 & 0  &  6184     &    max deviation
& 351.1482 & 1244 & 3.1549e+02  \\
&&&& max normalized deviation & 264.1741 & 1073  & 315.4917 \\
&&&& Dantzig's simplex & - & 100000 & -   \\
&&&& Dual simplex & + & + & +  \\ \hline
e226 & 223 & 0 &   472    &    max deviation 
& 1.0799 & 555 & -18.7519  \\  
&&&& max normalized deviation & 1.3605  & 697 &  -18.7519  \\
&&&& Dantzig's simplex & 3.9195  & 2464 &  -18.7519  \\
&&&& Dual simplex & + & + & +   \\ \hline
finnis & 497 & 0 & 1064  &    max deviation 
& 3.4604 &  728 & 1.7279e+05  \\
&&&& max normalized deviation & 3.5296  & 726 &  1.7279e+05  \\
&&&& Dantzig's simplex  & 54.2443   & 9098 &   1.7279e+05  \\
&&&& Dual simplex & + & + & +   \\ \hline
fit1p  & 627  & 0 & 1677    &    max deviation 
& 26.4729 & 1623 &  9.1464e+03  \\
&&&& max normalized deviation & 29.0710  & 1749 &  9.1464e+03  \\
&&&& Dantzig's simplex  & 197.5828 & 21679 &    9.1464e+03  \\
&&&& Dual simplex & + & + & +    \\ \hline
fit2p & 3000 & 0 & 13525    &    max deviation  
& 1.1779e+4 & 16618 &  6.8464e+04  \\
&&&& max normalized deviation & 1.1945e+4  & 16739 &  6.8464e+04  \\
&&&& Dantzig's simplex & - & 100000 & -     \\
&&&& Dual simplex & + & + & +    \\ \hline
ganges & 1309 & 0 & 1706   &    max deviation 
& 7.8143 & 1534 & -1.0959e+05  \\
&&&& max normalized deviation & 7.4581  & 1497 &  -1.0959e+05  \\
&&&& Dantzig's simplex & 34.3227  & 6496 &   -1.0959e+05 \\ 
&&&& Dual simplex & + & + & +      \\ \hline
gfrd\_pnc & 616 & 0   & 1160    &    max deviation 
& 1.4768 & 574 & 6.9022e+06  \\
&&&& max normalized deviation & 2.9531  & 640 &  6.9022e+06  \\
&&&& Dantzig's simplex & 29.8027  & 5229 & 6.9022e+06  \\
&&&& Dual simplex & + & + & +        \\ \hline
grow15*   & 300 & 0 & 645    &    max deviation 
 & 51.8446 & 10000  & -1.0968e+08  \\
 &&&& max normalized deviation & 9.0546  & 2367 &  -1.0687e+08  \\
&&&& Dantzig's simplex &  12.1133 & 2629 &  -1.0687e+08 \\
&&&& Dual simplex & + & + & +     \\ \hline
grow7*   & 140 & 0  &  301    &    max deviation 
& 9.0927 & 10000 & -4.8627e+07 \\  
&&&& max normalized deviation & 9.9165  & 10000 &  -4.8627e+07  \\
&&&& Dantzig's simplex &   1.4891  & 1105  & -4.7788e+07 \\
&&&& Dual simplex & + & + & +      \\ \hline
kb2   & 43 & 0 &  68  &    max deviation 
 &  0.0612 & 139 & -1.7499e+03  \\
 &&&& max normalized deviation & 0.0687  & 111 &  -1.74999  \\
&&&& Dantzig's simplex &  0.1387  & 320  &  -1.7499e+03  \\
&&&& Dual simplex & + & + & +      \\ \hline
ken\_07   &  2426 & 0 &  3602   &    max deviation 
& 113.3739 & 4321 & -6.7952e+08 \\ 
&&&& max normalized deviation & 95.9149  & 4008 &  -6.7953e+08  \\
&&&& Dantzig's simplex & - & 100000 &  - \\
&&&& Dual simplex & * & * & *    \\ \hline
pds\_02  & 2953 & 0  & 7716  &    max deviation 
& 4751 & 21339 & 2.8858e+10  \\
&&&& max normalized deviation & 24497  & 40774 &  2.8858e+10  \\
&&&& Dantzig's simplex & 670  & 41995 & 2.8858e+10  \\
&&&& Dual simplex & * & * & *    \\ \hline
recipe   & 91 & 0  & 204  &    max deviation 
& 0.0559 & 47 & -266.6160 \\
&&&& max normalized deviation & 0.1915  & 46 &  -266.6160  \\
&&&& Dantzig's simplex & 0.4704   & 682 &   -266.6160  \\
&&&& Dual simplex & 0.043 & 28 & -266.6160      \\ \hline
scorpion   & 388 & 0  & 466  &    max deviation 
& 0.2897 &  320 & 1.8781e+03 \\
&&&& max normalized deviation & 1.4751  & 329 &  1.8781e+03  \\
&&&& Dantzig's simplex & 3.3254  & 2251 &  1.6079e+12 \\
&&&& Dual simplex & * & * & *     \\ \hline
shell   & 536 & 0  & 1777  &    max deviation 
& 3.3670 & 690 & 1.2088e+09 \\
&&&& max normalized deviation & 10.3637  & 715 &  1.2088e+09  \\
&&&& Dantzig's simplex & - & 100000 & - \\
&&&& Dual simplex & - & 100000 & -     \\ \hline
sierra    & 1227 & 0  & 2735  &   max deviation 
& 7.6812 & 1266 & 1.5394e+07 \\
&&&& max normalized deviation & 74.5192  & 1347 &  1.5394e+07  \\
&&&& Dantzig's simplex & - & 100000 & -  \\
&&&& Dual simplex & * & * & *    \\ \hline
standata  & 359 & 0  & 1274  &    max deviation 
& 0.3990 & 123 & 1.2577e+03 \\
&&&& max normalized deviation & 3.1593  & 234 &  1.2577e+03  \\
&&&& Dantzig's simplex &  53.7744   & 8665 &   1.2577e+03   \\
&&&& Dual simplex & + & + & +      \\ \hline
standgub  & 361 & 0  & 1383  &   max deviation 
& 0.4033 & 123  & 1.2577e+03 \\
&&&& max normalized deviation & 3.1752  & 234 &  1.2577e+03  \\
&&&& Dantzig's simplex & 48.5099 & 8665 &  1.2577e+03  \\
&&&& Dual simplex & + & + & +       \\ \hline
standmps  & 467 & 0  & 1274  &   max deviation 
& 0.6406 & 250 & 1.4060e+03 \\
&&&& max normalized deviation & 52.6339  & 3623 &  1.4060   \\
&&&& Dantzig's simplex & 38.7677  & 7279 &   1.4060e+03 \\
&&&& Dual simplex & + & + & +       \\ \hline
\caption{Test of Algorithm \ref{mainAlg} on Netlib problems}
\label{netlibTable}
\end{longtable}
\normalsize

Dantzig's most negative simplex, dual simplex, and the facet pivot simplex (using maximal deviation and maximal normalized deviation rules)
methods are implemented in Matlab to solve this set of Netlib benchmark problems
\footnote{\begin{samepage}We do not have the access to commercial software and have not
implemented the hiding tricks available inside commercial software,
therefore we implemented only the basic functions given in the
literature. But this is the same for all four implemented algorithms. 
We expect that the comparison should be fair enough.\end{samepage}}.
The test results are summarized in Table \ref{netlibTable}.
The dual simplex algorithm can be more efficient than other algorithms for some testing problems (for example, fit1d and fit2d discussed in the next subsection) as noted in 
\cite{koberstein05,ks07,kostina02,maros03} 
but can fail on rank deficient problems (they are marked by '*') or when an intermediate base is ill conditioned that causes the dual simplex algorithm erroneously claims the problems are 
dual unbounded (they are marked by '+' in this case)\footnote{Since Netlib benchmark set selects most difficult problems including rank deficiency ones, the performance of dual simplex method in Table~\ref{netlibTable} is not a representative of the performance for real-world problems.}, while Dantzig's simplex 
and facet simplex methods are much robust for these tested problems.  
Dantzig's simplex method usually takes more CPU times than
dual simplex and facet simplex methods (for algorithms fail to 
find a solution after $100000$ iterations or more than 20 hours, 
they are marked by '-' ).
As it can be seen in Table \ref{netlibTable} 
the facet pivot simplex algorithm performs very well in 
solving these problems because it uses less CPU time than 
Dantzig's most negative pivot rule algorithm for all problems except
{\tt grow15, grow7} for which Dantzig’s algorithm uses less
CPU time. Moreover, the facet pivot simplex algorithm solves
all these problems successfully.

Performance profile
\footnote{To our best knowledge, performance 
profile was first used in \cite{ty96} to compare the performance
of different algorithms. The method becomes very 
popular after its merit was carefully analyzed in \cite{dm02}.}
is used to compare the efficiency of three algorithms
(the dual simplex method is not included
because it fails to solve many tested problems). Figure~1 
is the performance profile of iteration numbers and Figure 2
is the performance profile of CPU times used by the three algorithms.
It clearly shows that facet pivot algorithm is more efficient than
Dantzig's most negative algorithm in general.

\begin{figure}[htb]
\centerline{\includegraphics[height=5cm,width=10cm]
{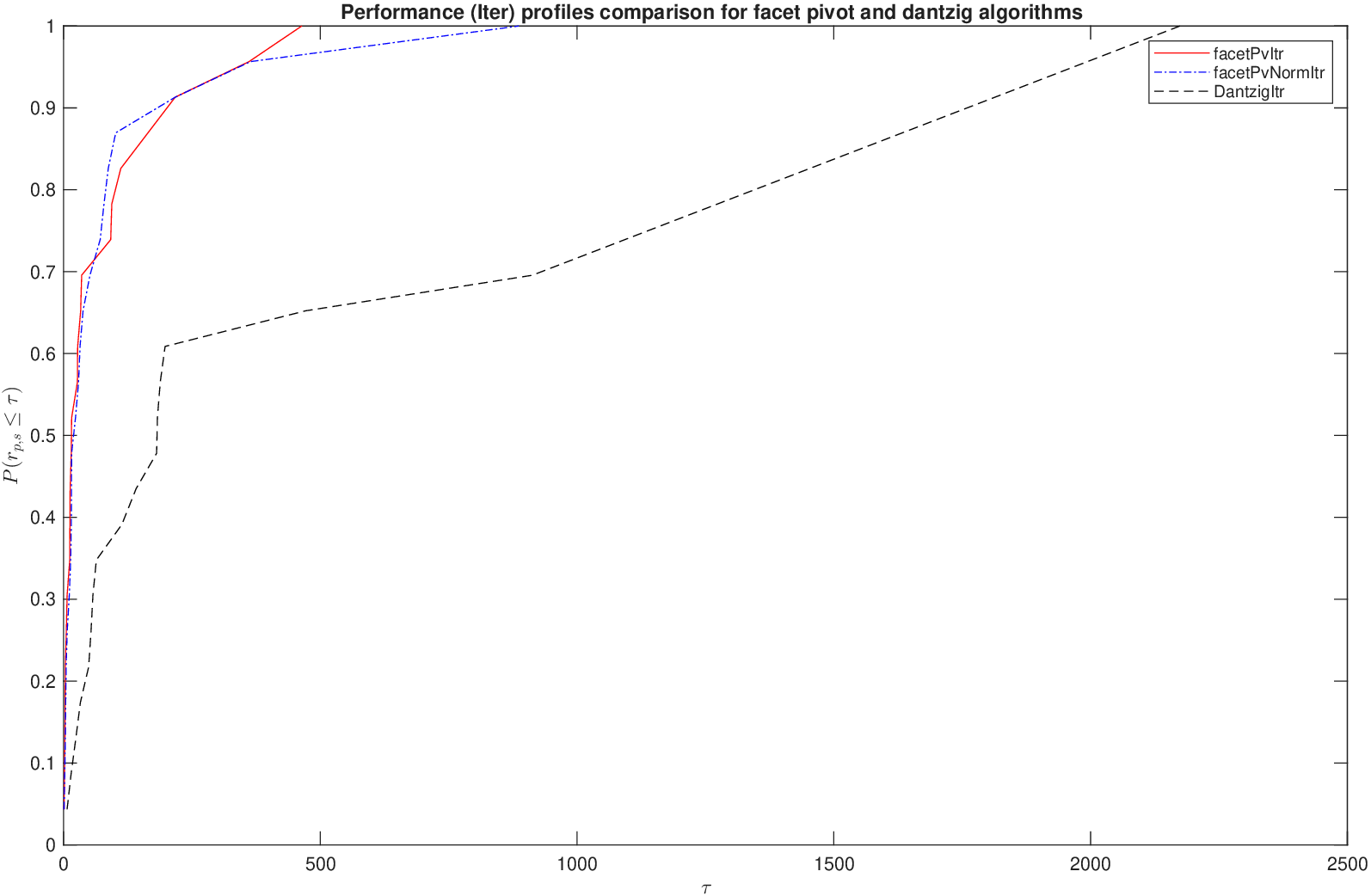}}
\caption{Performace (iteration number) profiles of the facet pivot and Dantzig's algorithms}
\label{case1}
\end{figure}

\begin{figure}[htb]
\centerline{\includegraphics[height=5cm,width=10cm]
{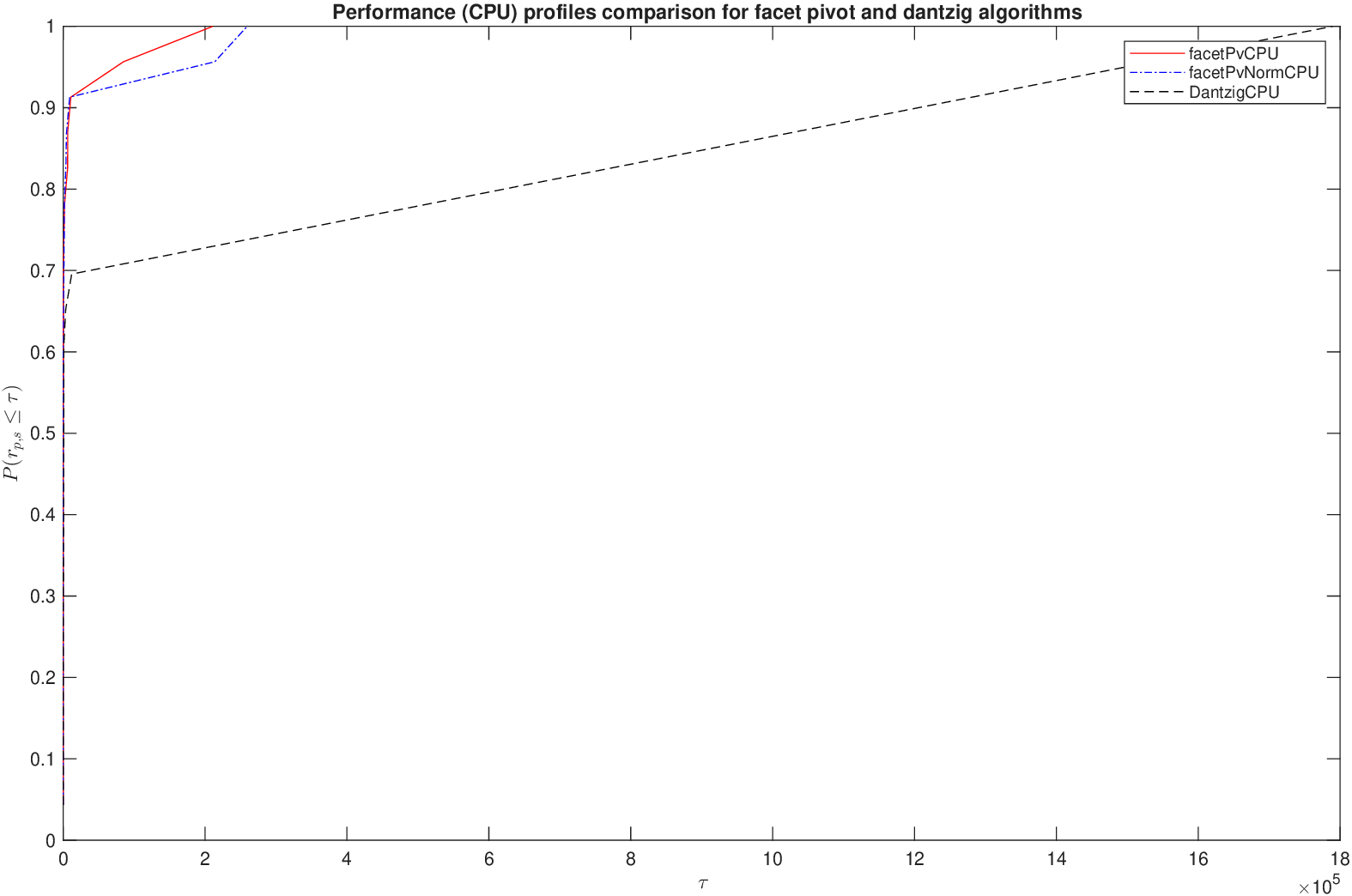}}
\caption{Performace (CPU) profiles of the facet pivot and Dantzig's algorithms}
\label{case2}
\end{figure}

\subsection{Test on Netlib benchmark problems where $d/m$ is large}\label{testSection1}

A reviewer suggested testing problems with $m<100$ and $d>100000$. It is known that some algorithms perform poorly for this type of problem. We examined all Netlib problems and found no problem satisfying these conditions. The closest ones are: FIT2D (m= 24, d=1049) and FIT2B (m= 26, d=10500). Following the suggestion of this reviewer,
we tested these problems using Dantzig's simplex method, the proposed facet pivot method, dual simplex method, plus two interior-point methods (IPM): Mehrotra's predictor-corrector method, and a recently developed arc-search IPM curveLP proposed in \cite{yang17}. All algorithms are implemented in Matlab, facet pivot code is available in Matlab file exchange site
\newline\footnotesize https://www.mathworks.com/matlabcentral/fileexchange/181706-facet-pivot-algorithm-for-linear-programming .
\newline\normalsize curveLP and Mehrotra's predictor-corrector codes are available in \cite{yang17} and
\newline https://www.mathworks.com/matlabcentral/fileexchange/53911-curvelp .
\newline 
Dantzig's most negative method is implemented in \cite{yang20a}.
The test results are summarized in the following table.
\footnotesize
\begin{longtable}{|c|c|c|c|c|c|c|c|}
	\hline          
	{Problem name} &  $m$ & $n$ & $d$ &  method  &  CPU  &   iter & obj 
	\\ \hline
	fit1d & 24 & 0 & 1049   &    Alg. \ref{mainAlg} 
	& 6.3799 & 628 & -9.1464e+03  \\
	&&&& Dantzig's simplex  & 87.7639   & 9454 &  -9.1464e+03 \\ 
	&&&& Dual simplex &  0.6051 & 2032 &  -4.6852e+03 \\ 
	&&&& Mehrotra's method & 1.195457  & 22 & -9.1464e+03 \\ 
	&&&& curveLP & 1.888994  & 23  & -9.1464e+03 \\ \hline
	fit2d  & 25  & 0 &  10524   &    Alg. \ref{mainAlg} 
	& 5.1125e+03  & 9253 & -6.8464e+04   \\
	&&&& Dantzig's simplex & 8.3172e+03  & 87594 &  -6.8464e+04      \\
	&&&& Dual simplex & 174.5856  & 25095 & -2.2948e+04 \\ 
	&&&& Mehrotra's method & 212.653359  & 26 & -6.8464e+04 \\ 
	&&&& curveLP & 193.126972  & 26  & -6.8464e+04 \\ \hline
	\caption{Test on Netlib problems with large ratio of $d/m$.}
	\label{ratiod2m}
\end{longtable}
\normalsize

For this type of problem, dual simplex method is the most efficient among all tested algorithms, followed by interior-point methods.

\subsection{Test on large Netlib benchmark problems with $n>50000$}

The reviewers are also suggested to test large Netlib benchmark problems with $n>50000$. We examined all Netlib problems and listed all problem in this category: CRE-B (m=9649  n=72447), CRE-D (m= 4351 n= 69980), KEN-18 (m=105128 n=154699), OSA-14 (m=2338   n=52460), OSA-30  (m= 4351 n=100024), OSA-60   (m=10281  n=232966), PDS-20 (m= 33875 n=105728). It is widely believed that interior-point method (IPM) is competitive for large scale problems. 

Following the suggestion of the reviewers, two efficient IPM algorithms (Mehrotra's predictor-corrector and arc-search) implemented in \cite{yang17} are tested against Dantzig's vertex pivot, facet pivot, and dual simplex methods. 
The results are listed in Tabel \ref{largeProblems}.
\footnotesize
\begin{longtable}{|c|c|c|c|c|c|c|c|}
	\hline          
	{Problem name} &  $m$ & $n$ & $d$ &  method  &  CPU  &   iter & obj 
	\\ \hline
	osa\_14  & 2338  & 0  & 52460  &  maximal deviation &  2.1178e+04    &  3669 & 1.1065e+06     \\
	&&&& maximal normalized deviation & 1.3508e+04  & 2648 &  1.1065e+06         \\
	&&&& Dantzig's simplex & -  & 100000 &  -      \\
	&&&& Dual simplex & +  & + & +     \\
	&&&& Mehrotra's predictor-corrector & 1.15727e+3  & 37 &  1.1065e+06   \\ 
	&&&& curveLP IPM & 1.02126e+3  & 33  &  1.1065e+06   \\ \hline

	osa\_30  & 4351  & 0  & 100024  &  maximal deviation &  -    &  - & -     \\
	&&&& maximal normalized deviation & -  & - &  -         \\
	&&&& Dantzig's simplex  & -  & 100000 &  -      \\
	&&&& Dual simplex & +  & + & +    \\
	&&&& Mehrotra's method & 4282.715520  & 36 & 2.1421e+06 \\ 
	&&&& arc-search IPM & 4300.476209  & 47  & 2.1421e+06 \\ \hline

	cre\_d  & 4351  & 0 &  69980   &   maximal deviation &  1.2322e+05    &  28535 & 2.4455e+07     \\
	&&&& maximal normalized deviation & 1.1237e+05  & 19328 &  2.4455e+07         \\
	&&&& Dantzig's simplex &-  & 100000 &  -      \\
	&&&& Dual simplex & +  & + & +  \\ 
	&&&& Mehrotra's predictor-corrector & + & + & + \\ 
	&&&& curveLP IPM & 292.682423  & 45  & 2.4455e+07 \\ \hline

	cre\_b  & 9649  & 0 &  72447   &   maximal deviation &  3.54835e+05   &  41078 & 2.3130e+07     \\
	&&&& maximal normalized deviation & 3.45125e+05  & 32438 &  2.3130e+07         \\
	&&&& Dantzig's simplex &-  & 100000 &  -      \\
	&&&& Dual simplex & +  & + & +  \\ 
	&&&& Mehrotra's predictor-corrector & 485.638819  & 49 &  2.3130e+07  \\ 
	&&&& curveLP IPM & 477.512233  & 47  & 2.3130e+07 \\ \hline

	\caption{Test on Netlib problems with large ratio of $d/m$.}
	\label{largeProblems}
\end{longtable}
\normalsize
In this test, for any problem, if an algorithm does not find an optimal solution in $96$ hours, it is marked as ``-''.
Dantzig's algorithm cannot find solutions in $100000$ iterations or  in $96$ hours for any of these problems. The dual simplex algorithm reported that these problems are dual unbounded because of rank deficiency or ill condition. Starting on OSA-30 ($n>100000$), facet pivot algorithm cannot find the solution in $96$ hours, therefore, the test stops at this point. Interior-point algorithms clearly perform much better for these large problems, they are significantly faster than other methods. For CRE-D, Mehrotra's predictor-corrector (MPC) failed because ``Matrix is close to singular or badly scaled''. Arc-search IPM performs best for this class of problems. Given this fact, facet pivot method is still important because Smale’s 9th problem asks researchers to find pivot methods that solve linear programming problem in polynomial bounds, and new pivot methods provide more ways to tackle the problem.
	
	
\subsection{Test on small size cycling problems}

A set of small size cycling problems is collected in \cite{yang21}. 
Algorithm \ref{mainAlg} has successfully solved all 30 problems
in this benchmark test set. Cycling does not happen for this set
of testing problems. However, this does not mean that the facet
pivot simplex algorithm using the maximal 
deviation rule will prevent the cycling problem from happening. 

\subsection{Test on Klee-Minty cube problems}

Klee-Minty cube and its variants have been used to prove that
several popular (vertex pivot) simplex algorithms need an exponential 
number (related to the problem size) of iterations in the worst case 
to find the optimal solution 
 \cite{ac78,friedmann11,gs79,jeroslow73}. 
In this section, three variants of Klee-Minty cube 
\cite{greenberg97,km11,ibrahima13} are used to test 
the facet pivot simplex algorithm.

The first variant of Klee-Minty cube is given in \cite{greenberg97}:
\begin{eqnarray}
\begin{array}{cl}
\min & -\sum_{i=1}^d 2^{d-i} x_i \\
\mbox{subject to} & 
\left[ 
\begin{array}{cccccc}
1 & 0 & 0 & \ldots & 0 & 0 \\
4 & 1 & 0 & \ldots & 0 & 0 \\
8 &4  & 1 & \ldots & 0 & 0 \\
\vdots &  \vdots & \vdots &  \ddots & 0 & 0 \\
2^{d-1} &  2^{d-2} & 2^{d-3} & \ldots  & 1 & 0 \\
2^d &  2^{d-1} & 2^{d-2} & \ldots & 4  & 1 \\
\end{array}
\right]
\left[ \begin{array}{c}
x_1 \\ x_2 \\ \vdots \\  \vdots \\ x_{d-1} \\ x_d
\end{array} \right] 
\le 
\left[ \begin{array}{c}
5 \\ 25 \\ \vdots \\  \vdots \\ 5^{d-1} \\ 5^d
\end{array} \right] 
  \\
& x_i \ge 0 \hspace{0.1in} i=1, \ldots, d.
\end{array}
\label{1stProblem}
\end{eqnarray}
The optimizer is $[ 0, \ldots, 0,5^d ]$ with optimal 
objective function $-5^d$.

The second variant of Klee-Minty cube is given in \cite{km11}:
\begin{eqnarray}
\begin{array}{cl}
\min & -\sum_{i=1}^d   x_i \\
\mbox{subject to} & 
x_1 \le 1, \\
& 2 \sum_{i=1}^{k-1} x_i +x_k \le 2^k-1
 \hspace{0.1in} k=2, \ldots, d, 
\\
& x_i \ge 0 \hspace{0.1in} i=1, \ldots, d.
  \\
\iff \min & [-1, -1, \ldots, -1, -1] \x 
\\
& \left[ 
\begin{array}{cccccc}
1 & 0 & 0 & \ldots & 0 & 0 \\
2 & 1 & 0 & \ldots & 0 & 0 \\
2 & 2  & 1 & \ldots & 0 & 0 \\
\vdots &  \vdots & \vdots &  \ddots & 0 & 0 \\
2 &  2 & 2 & \ldots  & 1 & 0 \\
2 &  2  & 2  & \ldots &  2   & 1 \\
\end{array}
\right]
\left[ \begin{array}{c}
x_1 \\ x_2 \\ \vdots \\  \vdots \\ x_{d-1} \\ x_d
\end{array} \right] 
\le 
\left[ \begin{array}{c}
1 \\ 3 \\ \vdots \\  \vdots \\ 2^{d-1}-1 \\ 2^d-1
\end{array} \right]  \\
& x_i \ge 0 \hspace{0.1in} i=1, \ldots, d.
\end{array}
\label{3rdProblem}
\end{eqnarray}
The optimizer is $[ 0, \ldots, 0, 2^d-1]$ with optimal 
objective function $2^d-1)$.

The third variant of Klee-Minty cube is given in \cite{ibrahima13}:
\begin{eqnarray}
\begin{array}{cl}
\min & -\sum_{i=1}^d 10^{d-i} x_i \\
\mbox{subject to} & 
2 \sum_{j=1}^{i-1} 10^{i-j} x_j +x_i \le 100^{i-1}
 \hspace{0.1in} i=1, \ldots, d,  
\\
& x_i \ge 0 \hspace{0.1in} i=1, \ldots, d.
  \\
\iff \min & [-10^{d-1}, -10^{d-2}, \ldots, -10, -1] \x 
\\
& \left[ 
\begin{array}{cccccc}
1 & 0 & 0 & \ldots & 0 & 0 \\
20 & 1 & 0 & \ldots & 0 & 0 \\
200 & 20  & 1 & \ldots & 0 & 0 \\
\vdots &  \vdots & \vdots &  \ddots & 0 & 0 \\
2(10^{d-2}) &  2(10^{d-3}) & 2(10^{d-4}) & \ldots  & 1 & 0 \\
2(10^{d-1}) &  2(10^{d-2}) & 2(10^{d-3}) & \ldots &  20  & 1 \\
\end{array}
\right]
\left[ \begin{array}{c}
x_1 \\ x_2 \\ \vdots \\  \vdots \\ x_{d-1} \\ x_d
\end{array} \right] 
\le 
\left[ \begin{array}{c}
1 \\ 100 \\ \vdots \\  \vdots \\ 10^{2(d-2)} \\ 10^{2(d-1)}
\end{array} \right]  \\
& x_i \ge 0 \hspace{0.1in} i=1, \ldots, d.
\end{array}
\label{2ndProblem}
\end{eqnarray}
The optimizer is $[ 0, \ldots, 0,10^{2(d-1)}]$ with optimal 
objective function $-10^{2(d-1)}$.


\begin{longtable}{|c|c|c|c|c|c|c|}
\hline          
\multirow{2}{*}{Problem} & \multicolumn{2}{|c|}
{Klee-Minty Variant 1 \cite{greenberg97}}  
& \multicolumn{2}{|c|} {Klee-Minty Variant 2 \cite{km11}}  & \multicolumn{2}{|c|} {Klee-Minty Variant 3 \cite{ibrahima13}}  \\ \cline{2-7}
{size}  & {Dantzig's rule}  & Alg. \ref{mainAlg} & {Dantzig' rule} & Alg. \ref{mainAlg}   & {Dantzig' rule} & Alg. \ref{mainAlg}    
\\ 
\hline
3     & 7                &   3  & $2^3-1$   & 3   & $2^3-1$   & 3    \\ \hline
4     &  15             &  4   & $2^4-1$    & 4   & $2^4-1$   & 4    \\ \hline
5     &  31             &  5   & $2^5-1$   & 5    & $2^5-1$   & 5    \\ \hline
6     &  63             &  6   & $2^6-1$    & 6     & $2^6-1$    & 6    \\ \hline
7     &  127           &  7    & $2^7-1$   & 7     & $2^7-1$   & 7     \\ \hline   
8     &  255           &  8   & $2^8-1$    & 8    & $2^8-1$    & 8       \\ \hline
9     &  511           &  9   & $2^9-1$    & 9     & $2^9-1$    & 9   \\ \hline
10   & 1023           &  10 & $2^{10}-1$    & 10  & $2^{10}-1$    & -    \\ \hline
11 & $2^{11}-1$    &  11 & $2^{11}-1$  & 11    & $2^{11}-1$  & -     \\ \hline
12 & $2^{12}-1$    &  12 & $2^{12}-1$  & 12    & $2^{12}-1$  & -      \\ \hline
13 & $2^{13}-1$    &  13 & $2^{13}-1$   &  13    & $2^{13}-1$   &  -  \\ \hline
14 & $2^{14}-1$    &  14 & $2^{14}-1$   & 14    & $2^{14}-1$   & -      \\ \hline
15 & $2^{15}-1$    &  15 & $2^{15}-1$   &  15     & $2^{15}-1$   &  -      \\ \hline
16 & $2^{16}-1$    &  16 & $2^{16}-1$   & 16  &   $2^{16}-1$   & -     \\ \hline
17   &  $2^{17}-1$                &  17  & $2^{17}-1$  & 17   &  $2^{17}-1$  & -     \\ \hline
18   &   $2^{18}-1$   &  18 & $2^{18}-1$   & 18   &  $2^{18}-1$   & -    \\ \hline
19   &  $2^{19}-1$   & 19  & $2^{19}-1$   &  19  &    $2^{19}-1$   &  -     \\ \hline
\caption{Iteration count comparison for Dantzig pivot and facet pivot
algorithms for the two Klee-Minty variants}
\label{table1}
\end{longtable}

The proposed facet pivot simplex algorithm is much more
efficient than Dantzig's most negative simplex method
for the first two Klee-Minty cube variants. For the third Klee-Minty cube variant, the facet pivot algorithm failed for problem size greater than $10$ because ``Matrix is close to singular or badly scaled'' (matrix $A$ has a poor condition number). The iteration
counts for Dantzig's most negative simplex method and facet
pivot simplex algorithm are listed in Table \ref{table1}.

\subsection{Test on randomly generated problems} \label{randomTest}

The facet pivot simplex algorithm has been tested for randomly generated
problems which are obtained as follows: first, given the problem
size $m$, an $m \times m$ matrix $\M$ with uniformly distributed
random entries between $[-0.5, 0.5]$ and an $m$ dimensional 
identity matrix are generated, then $\A=[\M~~~ \I]$ is 
determined and the initial base is composed of the last 
$m$ columns; second, a positive $m$-dimensional vector $\b$ 
whose entries are uniformly distributed between $[10, 11]$ is generated; 
third, a $m$-dimensional vector $\c_1$ whose entries are uniformly 
distributed between $[-0.5, 0.5]$ is generated, and $\c$ is 
given as $\c=(\c_1, \0)$. Therefore, linear programming problem
can be written as 
\begin{eqnarray}
\min & \c^{\T} \x \nonumber \\
s.t. & \A \x = \b \nonumber \\
 & \x \geq \0.
\label{simpleLP}
\end{eqnarray}
Clearly, Dantzig's pivot algorithm does not need a Phase I for this type of problem.
For each of these randomly generated 
standard LP problems, the Matlab codes for Dantzig's pivot algorithm 
and the facet pivot simplex algorithm are used to find the optimal solution.
For each given problem size $m$, this test is repeated for 
$100$ randomly generated problems. The average iteration
number and average computational time in seconds are 
obtained. The test results are presented in Table \ref{randomTable1}.
It is easy to see that for all problems with different sizes, 
the Dantzig algorithm uses less CPU time 
on average than the facet pivot simplex algorithm does. 

\begin{longtable}{|c|c|c|c|c|}
\hline          
\multirow{2}{*}{Problem} & \multicolumn{2}{|c|}
{Dantzig pivot algorithm}  
& \multicolumn{2}{|c|}{facet pivot simplex algorithm}   
\\ \cline{2-5}
{size m}  & iteration & CPU time (s) & iteration & CPU time (s)  \\ 
\hline
10      & 6.4100  & 0.0004   &   6.7900   &  0.0029    \\ \hline
100    & 160.8600  & 0.0778   &   161.3100   &  0.2558    \\ \hline
1000    & 1.9661e+4  & 1.0998e+3   &   1.9663e+4  &  5.1596e+3    \\ \hline
\caption{Comparison of the test results of Dantzig pivot and row 
pivot algorithms for random standard problems}
\label{randomTable1}
\end{longtable}

Although this test indicates that Dantzig's algorithm is more efficient 
than facet pivot simplex algorithm for standard LP and canonical LP 
problems if ``phase 1'' is not required for these problems, 
the next test shows that the facet pivot simplex algorithm is more 
efficient than Dantzig's algorithm for general LP problems
for which ``phase 1'' is indeed required. 
For the general LP problem (\ref{generalLP}), we can apply 
Dantzig's algorithm if we convert (\ref{generalLP}) as:
\begin{subequations}
\begin{align}
\min \hspace{0.5in}  &  [\c^{\Tr}~\0^{\T}~\0^{\T}~\0^{\T}]
(\x,~\y ,~\z,~\w)  \\ 
\mbox{\rm subject to} \hspace{0.3in}
&  \left[ \begin{array}{ccccc} 
\A^I & \0  & \0  & \0   \\ 
\A^J & -\I  & \0  & \0   \\
\I  & \0 & \I  & \0    \\
\I  & \0  & \0   &   -\I   
\end{array} \right]
 \left[ \begin{array}{c}
\x \\ \y \\ \z \\ \w \end{array} \right]
= \left[ \begin{array}{c} 
\b_I \\  \b_J \\ \u \\ {\Bell}
\end{array} \right]  \\
& ( \x, \y, \z, \w) \ge \0. 
\end{align}
\label{convertedLP}
\end{subequations}
Clearly, comparing to the general LP problem (\ref{generalLP}),
the standard general LP problem (\ref{convertedLP}) is ``bigger'' 
and it does need a ``phase 1'' to find the initial basic feasible solution 
if Dantzig's algorithm is used to solve it. Note that the facet pivot 
simplex algorithm needs only to solve the smaller general 
LP problem (\ref{generalLP}) and ``phase 1'' is not needed. 
The test result is given in Table \ref{randomTable2} and it 
clearly shows that facet pivot simplex algorithm is more efficient than 
Dantzig's algorithm for the general LP problem.

\begin{longtable}{|c|c|c|c|c|c|c|}
\hline          
\multicolumn{3}{|c|}{Problem size} & \multicolumn{2}{|c|}
{Dantzig pivot simplex algorithm}  
& \multicolumn{2}{|c|}{facet pivot simplex algorithm}  \\ \hline
$m$ & $n$ & $m_i$  & iteration & CPU time (s) 
& iteration & CPU time (s)  \\ \hline
1 & 5 & 3   & 13.7058  & 0.0150   &   14.6450   &  0.0115    \\ \hline
5 & 30 & 10    & 123.8073  & 0.0851   &   123.7481  &  0.0181    \\ \hline
10 & 100 & 20    & 562.2700  & 1.7319   &   563.2700  &  0.1190    \\ \hline
50 & 500 & 100  & 8.7134e+03  & 697.0669   &   8.7143e+03  &  21.7058    \\ \hline
\caption{Comparison of the test results of Dantzig pivot and facet pivot 
algorithms for correlated random general problems}
\label{randomTable2}
\end{longtable}

Finally, we generated test problems (\ref{convertedLP}) by using Gaussian distributed data and solved the problems by Dantzig pivot simplex algorithm and facet pivot simplex algorithm as suggested by a reviewer. We generated 100 instances for each test case. The test results are provided in Table \ref{randomTable3}.

\begin{longtable}{|c|c|c|c|c|c|c|}
	\hline          
	\multicolumn{3}{|c|}{Problem size} & \multicolumn{2}{|c|}
	{Dantzig pivot simplex algorithm}  
	& \multicolumn{2}{|c|}{facet pivot simplex algorithm}  \\ \hline
	$m$ & $n$ & $m_i$  & iteration & CPU time (s) 
	& iteration & CPU time (s)  \\ \hline
	1 & 5 & 3   & 16.8333  & 0.0032   &   17.5556   &  0.0022   \\ \hline
	5 & 30 & 10    & 127.7000  & 0.0180   &   128.7000  &  0.0054    \\ \hline
	10 & 100 & 20    & 495.8500  & 0.6712   &   496.8500  &  0.0148    \\ \hline
	50 & 500 & 100  & 4.5541e+03  & 214.9355   &   4.5551e+3  &  5.5354    \\ \hline
	\caption{Comparison of the test results of Dantzig pivot and facet pivot 
		algorithms for Gaussian distributed random general problems}
	\label{randomTable3}
\end{longtable}
It is clear that for problems with the format (\ref{convertedLP}) (which does need a phase I for Dantzig's simplex method), facet pivot algorithm is significantly more efficient.

\section{Conclusion}

In this paper, we proposed a facet pivot simplex algorithm. 
It is proven that the facet pivot simplex algorithm 
finds the optimal solution in finite iterations if the least index 
rule is used in the selection of entering/leaving rows (facets).
Two Matlab functions are developed to implement the 
facet pivot simplex algorithm. Numerical test is performed.
The test result shows that
the facet pivot simplex algorithm is more efficient than Dantzig's 
most negative pivot rule algorithm and is more robust than
dual simplex method for Netlib LP problems when Phase I is required. 
Additionally, the facet pivot simplex algorithm is more 
efficient than Dantzig's most negative pivot rule algorithm for 
some specially designed hard problems, such as cycling LP 
problems and Klee-Minty problems. For large scale problems, as expected, interior-point method is more efficient than pivot method.

\section{Acknowledgment}

The author would like to thank Dr. Y. Liu for sharing his excellent 
paper which is very helpful in the preparation of this work. Dr. Robert E. Pritchett of Goddard Space Flight Center at NASA reviewed an earlier version of the paper and helped the author to improve the presentation of the paper.

%

%
%
%
%



\begin{thebibliography}{99}



\bibitem{az96} 
N. Amenta and G. M. Ziegler, Deformed products and maximal shadows 
of polytopes, in Advances in discrete and computational geometry (South Hadley, MA, 1996), 
vol. 223 of Contemp. Math., Amer. Math. Soc., Providence, RI, pp. 57–90, 1999.

\bibitem{ac78} D. Avis and V. Chv\`{a}tal,
Notes on Bland's pivoting rule, Mathematical Programming
Study, 8, 24-34, 1978.

\bibitem{af17} 
D. Avis and O. Friedmann, An exponential lower bound for Cunningham’s rule, 
Mathematical Programming, 161, pp. 271–305, 2017.


\bibitem{beale55}  E.M.L. Beale, Cycling in the 
dual simplex method, Naval Research Logistics Quarterly, 
2(4), 269–75, 1955.

\bibitem{bland77}  R. G. Bland, New finite 
pivoting rules for the simplex method, Mathematics
of Operations Research, 2(2), 103-107, 1977.

\bibitem{bdgr95} S. Browne, J. Dongarra, E. Grosse, T. Rowan, 
The Netlib mathematical software repository. DLib magazine.
http://www.dlib.org/dlib/september95/netlib/09browne.html 1995. 

\bibitem{dantzig49} G.B. Dantzig,  Programming in 
a linear structure, Econometrica 17, 73-74, 1949.

\bibitem{dantzig63} G.B. Dantzig, Linear programming
and extensions, Princeton University Press, 1963. 

\bibitem{dm02} E.D. Dolan and J.J. More. Benchmarking 
optimization software with performance profiles. 
Mathematical Programming, Vol. 91, pp. 201-213, 2002.

\bibitem{fg92} J.J. Forrest and D. Goldfarb, Steepest edge simplex 
algorithms for linear programming, Mathematical Programming,
57(3), 341–374, 1992.

\bibitem{friedmann11} O. Friedmann, A subexponential 
lower bound for Zadeh's pivoting rule for solving linear
programs and games., In: IPCO, pp. 192-206, 2011.

\bibitem{fhz11}
O. Friedmann, T. D. Hansen, and U. Zwick, Subexponential lower 
bounds for randomized pivoting rules for the simplex algorithm, 
in Proceedings of the 43rd ACM Symposium on Theory of Computing, STOC’11,
San Jose, CA, USA, pp. 283–292, 2011.


\bibitem{gs79} D. Goldfarb and W.Y. Sit, Worst case
behavior of the steepest edge simplex method, Discrete 
Applied Mathematics, 1, 277-285, 197).

\bibitem{greenberg97} H. J. Greenberg, 
Klee-Minty polytope shows exponential time complexity 
of simplex method, University of Colorado at Denver, 
http://www.cudenver.edu/~hgreenbe, 1997.

\bibitem{hm04} J.A.J. Hall and K.I.M. McKinnon, 
The simplest examples where the simplex method cycles 
and conditions where EXPAND fails to prevent cycling,
Mathematical Programming, 100(1), 133-150,  2004.

\bibitem{hz15} 
T. Hansen and U. Zwick, An improved version of the random-facet pivoting 
rule for the simplex algorithm, in Proceedings of the Forty-Seventh Annual 
ACM on Symposium on Theory of Computing, STOC 2015,
Portland, OR, USA, June 14-17, pp. 209–218, 2015.

\bibitem{hoffman53} A.J. Hoffman, Cycling in the simplex 
algorithm, Washington DC: National Bureau of Standards, 
Report 2974, 1953.

\bibitem{ibrahima13} F. Ihrahima, Degeneracy and 
geometry in the simplex method, Stanford University report, 
https://pdfs.semanticscholar.org/0b28/52b085df3288d0ddcc28c5e511082fd03fef.pdf, 2013.

\bibitem{jeroslow73} R.G. Jeroslow, The simplex 
algorithm with the pivot rule of maximizing criterion 
improvement, Discrete Mathematics, 4, 367-377, 1973.

\bibitem{kalai92} G. Kalai and D. Kleitman, A 
quasi-polynomial bound for the diameter of graphs of polyhedra, 
Bulletin of the American Mathematical Society, 26, 315-316, 1992.

\bibitem{karmarkar84} N. Karmarkar, A new polynomial-time algorithm for
linear programming, Combinatorica, Vol. 4, pp. 375-395, 1984.

\bibitem{khachiyan} L. G. Khachiyan, A Polynomial Algorithm in Linear Programming,
Doklady Akademii Nauk SSSR 244, 1093-1096, 1979.

\bibitem{kgk79}  F.M. Kirillova R. Gabasov and O.I. Kostyukova,
A method of solving general linear programming problems,
Doklady AN BSSR, 23(3), 197–200, 1979. (in Russian).

\bibitem{km11}  T. Kitahara and S. Mizuno, Klee-Minty’s 
LP and upper bounds for Dantzig’s simplex method, Operations 
Research Letters, 39(2), 88-91, 2011.

\bibitem{km72} V. Klee and G.J. Minty, How good is the 
simplex algorithm? In O. Shisha, editor, Inequalities,
III, 159-175. Academic Press, New York, NY, 1972.

\bibitem{koberstein05}  A. Koberstein, The dual simplex method, techniques for a 
fast and stable implementation, Ph.D dissertation, Paderborn University,
November 2005.

\bibitem{koberstein08}  A. Koberstein, Progress in the dual simplex algorithm 
for solving large scale LP problems: techniques for a fast and stable implementation,
Computatinal Optimization and Applications, 41, 185-204, 2008.

\bibitem{ks07}  A. Koberstein, U.H. Suhl, Progress in the dual simplex algorithm 
for solving large scale LP problems: Practical dual phase 1 algorithms,
Computational Optimization and Applications 37( 1), 49-65, 2007.


\bibitem{kostina02}  E. Kostina, The long step rule in the 
bounded-variable dual simplex method: Numerical experiments. 
Mathematical Methods of Operations Research, 55, 413–429, 2002.

\bibitem{lemke54} C.E. Lemke, The dual method of solving the linear programming 
problem. Naval Research Logistics Quarterly, 1, 36–47, 1954.

\bibitem{ltz21} Y. Liu, Y. Tu, Z. Zhang, The row pivoting method 
for linear programming, Omega, DOI: 2020.102354, 202.
%

\bibitem{lms92} I. Lustig, R. Marsten, D. Shannon, On implementing
 Mehrotra's predictor-corrector interior-point method for linear programming,
SIAM journal on Optimization, Vol. 2, 432-449, 1992.

\bibitem{maros03} I. Maros. A generalized dual phase-2 simplex algorithm,
European Journal of Operational Research, 149(1), 1–16, 2003.

\bibitem{Mehrotra92} S. Mehrotra, On the implementation of a 
primal-dual interior point method, SIAM Journal on Optimization, 2, 
pp. 575-601, 1992.

\bibitem{pan05} P. Q. Pan, A revised dual projective pivot algorithm for 
linear programming, SIAM J. OPTIM., 16(1), pp. 49–68, 2005.

\bibitem{paparrizos93} K. Paparrizos, An exterior point
simplex algorithm for (general) linear programming,
Annals of Operations Research, 46, 497-508, 1993.


\bibitem{ps14} N. Ploskas and N. Samaras, Pivoting 
rules for the revised simplex algorithm, Yugoslav Journal of 
Operations Research, 24(3), 321-332, 2014.

\bibitem{santos12} F. Santos, A counterexample to the 
Hirsch conjecture, Annals of Mathematics, 176, 383-412, 2012.

\bibitem{santos12a} F. Santos, The Hirsch conjecture
has been disproved:  An interview with Francisco Santos, 
EMS Newsletter, December 2012, 2012.

%

\bibitem{sukegawa19} N. Sukegawa, An asymptotically improved 
upper bound on the diameter of polyhedra, Discrete \& Computational 
Geometry 62 (3), 690-699, 2019.

\bibitem{tz93} T. Terlaky and S. Zhang, Pivot rules for 
linear programming: A survey on recent theoretical 
developments, Annals of Operations Research, Vol. 46 (1), 
203-233, 1993.

\bibitem{ty96} A.L. Tits and Y. Yang, Globally convergent 
algorithms for robust pole assignment by state feedback, IEEE
transactions on Automatic Control, Vol. 41, pp. 1432-1452, 1996.

\bibitem{todd14} M. J. Todd, An improved 
Kalai--Kleitman bound for the diameter of a polyhedron, 
SIAM Journal on Discrete Mathematics, 26(2), 1944-1947, 2014.

\bibitem{vanderbei14} R.J. Vanderbei, Linear Programming: 
Foundations and Extensions, Springer, New York, 2014.

\bibitem{ve18} F. Vitor and T. Easton, The double 
pivot simplex method, Mathematical Methods of
Operations Research, 87(1), 109-137, 2018.

\bibitem{wright97} S. Wright, Primal-Dual Interior-Point Methods, SIAM, 
Philadelphia, 1997.

\bibitem{yang17} Y. Yang, CurveLP-A MATLAB implementation of an infeasible interior-point algorithm for linear programming, Numerical Algorithms, 74, 967–996, 2017.

%

\bibitem{yang19} Y. Yang, A facet enumeration algorithm for
convex polytopes, arXiv:1909.11843, 2019. 

\bibitem{yang20} Y. Yang, Arc-search techniques for
interior-point methods, CRC Press, Boca Raton, 2020.

\bibitem{yang20a} Y. Yang, A double-pivot simplex 
algorithm and its upper bounds of the iteration numbers,
Research in the Mathematical Sciences, 7, 34, 2020.

\bibitem{yang21} Y. Yang, Cycling problems in 
linear programming, arXiv:2101.01805 [math.OC], 2021.

\bibitem{yang25} Y. Yang, A facet pivot algorithm for linear programming, Matlab File Exchange https://www.mathworks.com/matlabcentral/fileexchange/181706. Retrieved August 3, 2025.

\bibitem{ziegler04}
G. M. Ziegler, Typical and extremal linear programs, in The sharpest cut, MPS/SIAM 
Ser. Optim., SIAM, Philadelphia, PA, pp. 217–230, 2004.

\bibitem{zornig06} P. Zörnig, Systematic construction 
of examples for cycling in the simplex method, Computers \& 
Operations Research, 33(8), 2247-2262, 2006.

\end{thebibliography}
\end{document}